 \theoremstyle{plain}
\newtheorem{theorem}{Theorem}[section]
\newtheorem{lemma}[theorem]{Lemma}
\newtheorem{corollary}[theorem]{Corollary}
\newtheorem{prop}[theorem]{Proposition}
\newtheorem{proposition}[theorem]{Proposition}
\theoremstyle{definition}
\newtheorem{example}[theorem]{Example}
\def\T{\mathbb{T}}
\def\Z{\mathbb{Z}}
\def\C{\mathbb{C}}
\def\R{\mathbb{R}}
\let\phi\varphi
\let\epsilon\varepsilon
\begin{document}
\title[on isomorphisms between weighted $L^p$-algebras]{on isomorphisms between weighted $L^p$-algebras}

\author[Y. Kuznetsova]{Yulia Kuznetsova$^1$}

\address{Laboratoire de Math\'ematiques de Besan\c{c}on,
Universite Bourgogne Franche-Comt\'e, France}
\email{\texttt{yulia.kuznetsova@univ-fcomte.fr}}

\author[S. Zadeh]{Safoura Zadeh$^2$}
\address{Laboratoire de Math\'ematiques de Besan\c{c}on,
Universite Bourgogne Franche-Comt\'e, France \& Faculty of Graduate Studies, Dalhousie University, Canada.
}
\email{\texttt{jsafoora@gmail.com}}

\keywords{Locally compact groups; Weighted $L^p$ algebras; Topological group isomorphisms; Biseparating isomorphism; Small bound isomorphism}

\subjclass[2000]{43A20; 43A22}
\footnotetext[1]{This work was supported by the French ``Investissements d'Avenir'' program, project ISITE-BFC (contract ANR-15-IDEX-03).}
\footnotetext[2]{Research supported by the incoming mobility program of the Region of Bourgogne-Franche-Comt\'e, France}
\begin{abstract}
Let $G$ be a locally compact group and $\omega$ be a continuous weight on $G$. In this paper, we first characterize bicontinuous biseparating algebra isomorphisms between weighted $L^p$-algebras. As a result we extend previous results of Edwards, Strichartz and Parrot on algebra isomorphisms between $L^p$-algebras to the setting of weighted $L^p$-algebras. We then study the automorphisms of certain weighted $L^p$-algebras on integers, applying known results on composition operators on classical function spaces.
\end{abstract}
\maketitle
\section{Introduction and Preliminaries}\label{sec1}
Let $G$ be a locally compact group with a fixed Haar measure denoted by $\lambda$. By the group algebra on $G$ we mean the convolution Banach algebra $L^1(G)$ of Haar-integrable functions on $G$. It is well-known that continuous unitary representations of $G$ correspond canonically to non-degenerate involutive representations of $L^1(G)$. This makes one wonder how far this correspondence goes, and in particular, if the existence of an algebra isomorphism between two group algebras implies that the underlying groups are isomorphic. In \cite{MR0049910}, Wendel showed that, in general, the answer to this question is negative. Therefore, we have to impose some constraints on the isomorphisms. 

In \cite{MR0025472}, Kawada proved that if we have a bipositive algebra isomorphism between group algebras, then the underlying locally compact groups must be isomorphic. Wendel \cite{MR0049910} showed that the same conclusion holds for isometric isomorphisms between group algebras.
If  we assume the algebra isomorphism is of small bound instead of isometric, then as it is proved by Kalton and Wood \cite{MR0415205}, the existence of an algebra isomorphism of norm less than a constant $\gamma\simeq1.2\dots$ between the group algebras implies that the underlying topological groups must be isomorphic. Wood obtained later many interesting results on similar questions (see for example \cite{MR0463808}, \cite{MR1089950}, \cite{MR1358183} and \cite{MR1871399}).

The same questions have been asked for $L^p$-convolution algebras of compact groups with $p>1$. In \cite{MR0183807}, Edwards proved that for $1\leq p<\infty$, the existence of a bipositive algebra isomorphisms between $L^p$-algebras of compact groups implies that the groups are isomorphic, and asked if a similar result holds if we consider an isometric algebra isomorphism instead. Parrot \cite{MR0227323} and Strichartz \cite{MR01935310}, independently, provided an affirmative answer to Edward's question for the case of compact groups with $1\leq p<\infty,\ p\neq2$. When $p=2$, as it is also mentioned in \cite[Page 861]{MR01935310}, $L^2(G)$ does not determine $G$. In fact, if for every positive integer $k$, the cardinality of the set of equivalence classes of irreducible representations of dimension $k$ of $G$ is equal to the cardinality of the corresponding set of $H$, then $L^2(G)$ is isometrically isomorphic to $L^2(H)$.  

Adding a weight can considerably enlarge the class of algebras associated with groups. Let $G$ be a locally compact group, and let $\omega:G\to\mathbb{R}^+$ be a positive continuous function. Define the Banach space $L^p(G,\omega)$ as
$$
L^p(G,\omega):=\{f:G\to\mathbb{C} \ |\ f\text{ is measurable and } \int_G\lvert f(x)\rvert^p\omega(x)^p dx<\infty\},
$$
with the norm $\|f\|_{p,\omega}:=\left(\int_G|f(x)|^p\omega(x)^p dx\right)^{1/p}$. So, when the weight $\omega\equiv 1$, we have the classical definition of $L^p$-spaces.

If $\omega (xy)\leq\omega(x)\omega(y)$ for all $x$, $y\in G$, then $L^1(G,\omega)$ is an algebra with respect to convolution. Depending on the weight, these algebras can be quite different from $L^1(G)$: An example is given already by the Beurling's result \cite{beurling} on whether or not $L^1(G,\omega)$ possesses the regularity property.

If $G$ is not compact and $p\neq1$, $L^p(G)$ is not an algebra for convolution. However, weighted $L^p$-spaces on non-compact groups, and in particular on any sigma-compact group \cite{kuz-mz} may form Banach algebras. For $p>1$, a sufficient condition for $L^p(G,\omega)$ to be a convolution algebra is $\omega^{-q}*\omega^{-q}\le\omega^{-q}$ with $q$ such that $1/p+1/q=1$. Virtually all weights occuring in practice satisfy this condition. 

It is natural to ask whether the presence of a weight can enlarge the set of isomorphisms between group algebras. Surprisingly, a weighted structure is as rigid as an unweighted one: if there exists a bipositive \cite{MR3589852} or isometric \cite{MR3462562} algebra isomorphism between $L^1(G,\omega_1)$ and $L^1(H,\omega_2)$ then the groups $G,H$ are isomorphic, and the weights are linked by a multiplicative relation.

In Section 2 of this paper we investigate isomorphisms of weighted $L^p$-algebras on locally compact groups with $p>1$. 
Our approach unites both problems of characterizing bipositive algebra isomorphism and isometric algebra isomorphism into one problem of characterizing bicontinuous biseparating algebra isomorphisms (an operator $T$ is biseparating if both $T$ and $T^{-1}$ are disjointness preserving). By doing so, we conclude that the existence of a bipositive or isometric ($p\ne2$) algebra isomorphism between $L^p(G,\omega_1)$ and $L^p(H,\omega_2)$, $p\geq1$, implies that the groups $G,H$ are isomorphic, and the isomorphism is of a canonical form: a weighted composition operator induced by a group character and a group isomorphism.

As indicated before, without restriction on the norm there are isomorphisms of group algebras which are not expresssed in this canonical form, even if the two groups are the same. This can be observed most easily on the examples of finite groups. There is however a series of results which show that for certain groups, first and foremost $\Z$, there are very few isomorphisms of groups algebras. It follows from the Beurling-Helson theorem that every isomorphism of $\ell^1(\Z)$ is either identical or the composition with the map $\phi: n\mapsto -n$.

In Section 3 we apply some variants of the Beurling-Helson theorem to study the automorphisms of the classical algebras $\ell^p(\Z,\omega)$ without restriction on the norm. The result is that for polynomial weights, there is no nontrivial automorphisms for all $p\ge1$, $p\ne2$. We also provide some examples with fast-growing weights.

\bigskip

{\bf Notation.} In the sequel, a weight on $G$ is a positive continuous function $\omega:G\to\mathbb{R}^+$ such that $\omega (xy)\leq\omega(x)\omega(y)$, for all $x$, $y\in G$ and $\omega(e_G)=1$, where $e_G$ is the identity element of $G$. By a weighted locally compact group, we mean a pair $(G,\omega)$, where $G$ is a locally compact group and $\omega$ is a weight function on $G$. Note that $L^p(G,\omega)$ as a Banach space is isometrically isomorphic to $L^p(G)$ via the mapping $L^p(G,\omega)\to L^p(G);\ f\mapsto f\omega$. 

Throughout the paper, $\chi_E$ denotes the characteristic function of a set $E$. Note that the space $L^p(G,\omega)$ contains $\chi_E$ for every set $E\subset G$ of finite measure.

Let $(G,\omega)$ be a weighted locally compact group. For $x\in G$, define the left and right translation operators by
$$
l_xf(y)=f(x^{-1}y), \qquad r_xf(y)=f(yx^{-1}),
$$
where $f\in L^p(G,\omega)$ and $y\in G$. It follows from our assumptions on the weight that both $l_x$ and $r_x$ are bounded on $L^p(G,\omega)$, with
$$
\|l_x\|\leq\omega(x) \text { and } \|r_x\|\leq\omega(x)\Delta(x)^{-1/p},
$$
where $\Delta$ is the modular function of $G$.

We recall that a linear operator $L$ on $L^p(G,\omega)$ is a left multiplier if
$$
L(f\ast g)=L(f)\ast g, \qquad f,g\in L^p(G,\omega).
$$ 
In particular, the mapping $f\mapsto l_xf$ is a left multiplier on $L^p(G,\omega)$. 

We conclude this section with some lemmas that are needed for our arguments in Sections \ref{3}.

The following lemma is an analogue of \cite[Proposition 1]{MR0193531}, for weighted $L^p$-algebras. The proof is an easy modification of the proof of \cite[Proposition 1]{MR0193531} and is therefore omitted.
\begin{lemma}\label{lemma1}
A bounded operator $L:L^p(G,\omega)\to L^p(G,\omega)$ is a left multiplier if and only if  $Lr_x = r_xL$, for all $x\in G$.
\end{lemma}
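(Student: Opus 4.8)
The plan is to prove the two implications separately, mirroring the classical argument of Wendel for left multipliers on $L^1(G)$ but keeping track of the right-translation operators $r_x$ throughout.

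First, the easy direction: suppose $L$ is a left multiplier. For fixed $x\in G$, right translation commutes with convolution in the sense that $(f*g)*\chi$-type identities give $r_x(f*g)=f*(r_xg)$ for all $f,g\in L^p(G,\omega)$ (this is the elementary observation that $r_x$ acts on the ``second variable'' of the convolution, and it makes sense here because $r_x$ is bounded on $L^p(G,\omega)$ by the estimate $\|r_x\|\le\omega(x)\Delta(x)^{-1/p}$ recalled above). Then for all $f,g$,
$$
Lr_x(f*g)=L\bigl(f*(r_xg)\bigr)=L(f)*(r_xg)=r_x\bigl(L(f)*g\bigr)=r_xL(f*g).
$$
Since convolutions $f*g$ are dense in $L^p(G,\omega)$ (for instance, take $g$ running through an approximate identity, or invoke the factorization/density of $L^p(G,\omega)*L^p(G,\omega)$), and since both $Lr_x$ and $r_xL$ are bounded, we conclude $Lr_x=r_xL$.

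For the converse, assume $Lr_x=r_xL$ for all $x\in G$. The goal is to show $L(f*g)=L(f)*g$. The standard device is to write the convolution $f*g$ as a ``continuous superposition'' of right translates of $f$: formally $f*g(y)=\int_G f(yx^{-1})g(x)\,dx=\int_G (r_xf)(y)\,g(x)\,dx$, so that $f*g=\int_G g(x)\,r_xf\,dx$ as a vector-valued (Bochner) integral in $L^p(G,\omega)$. One must justify that this integral converges in $L^p(G,\omega)$ and commutes with the bounded operator $L$; this is where the weight enters, since one needs $x\mapsto g(x)\,r_xf$ to be Bochner integrable, which follows from $\int_G |g(x)|\,\|r_xf\|_{p,\omega}\,dx\le \|f\|_{p,\omega}\int_G |g(x)|\,\omega(x)\Delta(x)^{-1/p}\,dx$ — finite when, say, $g$ is chosen first from a suitable dense subset (compactly supported continuous functions, or functions in $L^p(G,\omega)\cap L^1(G,\omega\Delta^{-1/p})$) and then one passes to the limit using boundedness of $L$ and of convolution. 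Granting this, $L(f*g)=\int_G g(x)\,L(r_xf)\,dx=\int_G g(x)\,r_x(Lf)\,dx=(Lf)*g$, as desired.

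The main obstacle is the vector-valued integration step in the converse: one has to pin down a dense subalgebra (or at least a dense subspace closed under the relevant operations) on which the Bochner integral representation $f*g=\int_G g(x)\,r_xf\,dx$ is valid in the $L^p(G,\omega)$-norm, and then argue by density and continuity of $L$ that the identity $L(f*g)=(Lf)*g$ extends to all of $L^p(G,\omega)\times L^p(G,\omega)$. Since the paper explicitly says this proof is ``an easy modification'' of \cite[Proposition 1]{MR0193531} and omits it, I would simply indicate that the only change from the unweighted case is the replacement of the bound $\|r_x\|\le\Delta(x)^{-1/p}$ by $\|r_x\|\le\omega(x)\Delta(x)^{-1/p}$, which does not affect the integrability arguments because the test functions can be chosen with compact support.
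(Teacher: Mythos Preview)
The paper does not actually give a proof of this lemma: it only says the argument is ``an easy modification'' of \cite[Proposition~1]{MR0193531} and omits it. So there is no paper proof to compare against; what you have written is a correct reconstruction of the standard argument, and your closing remark---that the only change from the unweighted case is the bound $\|r_x\|\le\omega(x)\Delta(x)^{-1/p}$, harmless once the test functions are taken with compact support---is exactly the point the authors are alluding to.

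Two minor tightenings. In the forward implication you appeal to density of the set $\{f*g\}$ in $L^p(G,\omega)$; this follows from the existence of an approximate identity (used implicitly in the remark preceding Lemma~\ref{lastlemma}), but your parenthetical mention of Cohen factorization is a little risky, since $L^p(G,\omega)$ need not have a \emph{bounded} approximate identity for general weights. In the converse you implicitly need continuity of $x\mapsto r_xf$ in $L^p(G,\omega)$ to get strong measurability of the Bochner integrand; the paper only states the $l_x$ version (Lemma~\ref{lem}), but the right-translation analogue is proved identically. With these cosmetic points noted, your argument is complete and matches what the cited modification would yield.
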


The following is \cite[Lemma 4.1]{kuz-mz}:
\begin{lemma}\label{lem}
Let $(G,\omega)$ be a weighted locally compact group and $f\in L^p(G,\omega)$ be given. The mapping $x\mapsto l_xf; G\to L^p(G,\omega)$ is continuous.
\end{lemma}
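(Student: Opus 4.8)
The plan is to adapt the classical proof of strong continuity of translation on $L^p(G)$, keeping track of the weight via the continuity (hence local boundedness) of $\omega$. First I would reduce to continuity at the identity $e$: writing $x=x_0y$ with $y\to e$ and using $l_xf=l_{x_0}l_yf$ together with $\|l_{x_0}\|\le\omega(x_0)$ from the Notation section,
$$
\|l_xf-l_{x_0}f\|_{p,\omega}=\|l_{x_0}(l_yf-f)\|_{p,\omega}\le\omega(x_0)\,\|l_yf-f\|_{p,\omega},
$$
so it is enough to prove $\|l_yf-f\|_{p,\omega}\to0$ as $y\to e$.

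Next I would use density of $C_c(G)$ in $L^p(G,\omega)$: since $\omega$ is continuous and strictly positive, multiplication by $\omega$ carries $C_c(G)$ bijectively onto itself, and under the isometric isomorphism $L^p(G,\omega)\to L^p(G)$, $f\mapsto f\omega$, from the Notation section, $C_c(G)$ corresponds to the dense subspace $C_c(G)\subset L^p(G)$. Fix a compact neighbourhood $V$ of $e$, put $M:=\sup_{y\in V}\omega(y)<\infty$, and given $\epsilon>0$ pick $g\in C_c(G)$ with $\|f-g\|_{p,\omega}<\epsilon$; then for $y\in V$,
$$
\|l_yf-f\|_{p,\omega}\le\omega(y)\|f-g\|_{p,\omega}+\|l_yg-g\|_{p,\omega}+\|g-f\|_{p,\omega}\le(M+1)\epsilon+\|l_yg-g\|_{p,\omega}.
$$

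It then remains to treat $g\in C_c(G)$. With $K:=\operatorname{supp}g$ and $L:=VK$, the functions $g$ and $l_yg$ (for $y\in V$) are all supported in the compact, finite-measure set $L$, and $C:=\sup_{x\in L}\omega(x)^p<\infty$ by continuity of $\omega$, so
$$
\|l_yg-g\|_{p,\omega}^p=\int_L|g(y^{-1}x)-g(x)|^p\,\omega(x)^p\,dx\le C\int_L|g(y^{-1}x)-g(x)|^p\,dx.
$$
Since $g$ is uniformly continuous, $\sup_x|g(y^{-1}x)-g(x)|\to0$ as $y\to e$, and the last integral, being over a fixed set of finite Haar measure, tends to $0$. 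Combining the displays yields $\limsup_{y\to e}\|l_yf-f\|_{p,\omega}\le(M+1)\epsilon$ for every $\epsilon>0$, hence continuity at $e$, and then everywhere by the first reduction. The only point requiring attention beyond the unweighted case is ensuring that $\omega$ is uniformly bounded on the neighbourhood $V$ and on the compact set $L$ carrying the supports, which is immediate from continuity of $\omega$; I do not expect a genuine obstacle here.
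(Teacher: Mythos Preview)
Your argument is correct: the reduction to $e$, the density of $C_c(G)$ in $L^p(G,\omega)$ via the isometry $f\mapsto f\omega$, and the compact-support estimate using local boundedness of $\omega$ all go through exactly as you describe.

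Note, however, that the paper does not prove this lemma at all: it is simply quoted as \cite[Lemma~4.1]{kuz-mz}. So there is no ``paper's own proof'' to compare against. What you have written is precisely the standard proof one would expect to find in the cited reference (approximation by $C_c(G)$ plus uniform continuity on compacta, with the weight controlled by continuity), and it is complete as stated.
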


\begin{lemma}\label{l1}
Let $(G,\omega)$ be a weighted locally compact group. For every shrinking basis $\{U_{\alpha}\}$ of compact neighbourhoods of the identity $e_G$ we have that
$$
\frac{1}{\lambda(U_\alpha)}\|l_x\raise 1.5 pt\hbox{$\chi$}{}_{U_\alpha}\|^p_{p,\omega} \to \omega^p(x).
$$
\end{lemma}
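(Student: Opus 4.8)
The plan is to reduce the statement to a continuity-of-averages argument. First I would rewrite $l_x\raise1.5pt\hbox{$\chi$}_{U_\alpha}$ as a characteristic function: since $l_x\raise1.5pt\hbox{$\chi$}_{U_\alpha}(y)=\chi_{U_\alpha}(x^{-1}y)=\chi_{xU_\alpha}(y)$, we obtain
$$
\|l_x\raise1.5pt\hbox{$\chi$}_{U_\alpha}\|_{p,\omega}^p=\int_{xU_\alpha}\omega(y)^p\,d\lambda(y).
$$
Because $\lambda$ is a left Haar measure, $\lambda(xU_\alpha)=\lambda(U_\alpha)$, and since each $U_\alpha$ is a compact neighbourhood of $e_G$ this quantity is a positive finite number. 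Hence the left-hand side of the claimed limit is exactly the mean value
$$
\frac{1}{\lambda(xU_\alpha)}\int_{xU_\alpha}\omega(y)^p\,d\lambda(y).
$$

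Next I would estimate the deviation from $\omega(x)^p$ by writing $\omega(x)^p$ as the average of the constant function over $xU_\alpha$ and using the triangle inequality for integrals:
$$
\left|\frac{1}{\lambda(xU_\alpha)}\int_{xU_\alpha}\omega(y)^p\,d\lambda(y)-\omega(x)^p\right|\le\frac{1}{\lambda(xU_\alpha)}\int_{xU_\alpha}\bigl|\omega(y)^p-\omega(x)^p\bigr|\,d\lambda(y)\le\sup_{y\in xU_\alpha}\bigl|\omega(y)^p-\omega(x)^p\bigr|.
$$
Finally I would invoke the continuity of $\omega$, hence of $\omega^p$, at the point $x$, together with the defining property of a shrinking basis: given $\epsilon>0$, pick a neighbourhood $V$ of $e_G$ such that $|\omega(y)^p-\omega(x)^p|<\epsilon$ whenever $y\in xV$, and then choose $\alpha_0$ with $U_\alpha\subseteq V$ for all $\alpha\ge\alpha_0$; for such $\alpha$ we have $xU_\alpha\subseteq xV$, so the supremum above is $<\epsilon$. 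This yields the asserted convergence.

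There is no genuine obstacle here: the argument is essentially a Lebesgue-differentiation-type statement for the continuous function $\omega^p$. The only points that need a word of care are the identification $l_x\raise1.5pt\hbox{$\chi$}_{U_\alpha}=\chi_{xU_\alpha}$, the use of left-invariance of $\lambda$ to replace $\lambda(xU_\alpha)$ by $\lambda(U_\alpha)$, and the fact that $0<\lambda(U_\alpha)<\infty$ so that the normalisation is meaningful and $\chi_{U_\alpha}\in L^p(G,\omega)$ — all guaranteed because every $U_\alpha$ is a compact neighbourhood of the identity.
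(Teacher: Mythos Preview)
Your argument is correct and is essentially the same as the paper's: after identifying $\|l_x\chi_{U_\alpha}\|_{p,\omega}^p$ with $\int_{xU_\alpha}\omega^p\,d\lambda=\int_{U_\alpha}\omega^p(xy)\,dy$, both proofs subtract $\omega^p(x)$ and invoke the continuity of $\omega^p$ to conclude. The paper simply states the convergence in one line, whereas you spell out the sup bound and the $\epsilon$--$\alpha_0$ step, but the underlying idea is identical.
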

\begin{proof}
We have
$$
\frac{1}{\lambda(U_{\alpha})}\int_{U_\alpha}l_x\omega^p(y)dy-\omega^p(x)=\frac{1}{\lambda(U_{\alpha})}\int_{U_\alpha}\Big( \omega^p(xy)-\omega^p(x) \Big)\ dy\to0,
$$
since $\omega^p$ is continuous.
\end{proof}

Before we proceed with the next lemma let us remark that, using an approximate identity of $L^p(G,\omega)$, we can see that if for some $f\in L^p(G,\omega)$
$$
h\ast f=0\qquad h\in L^p(G,\omega),
$$
then we must have that $f=0$, almost everywhere.

\begin{lemma}\label{lastlemma}
If for some positive number $\gamma$ and $x\in G$, we have that $\gamma \,l_x$ is both a left multiplier of $L^p(G,\omega)$ and an algebra isomorphism, then $\gamma=1$ and $x=e_G$.
\end{lemma}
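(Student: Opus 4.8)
The plan is to use the multiplier condition together with the algebra‐isomorphism property to pin down both $\gamma$ and $x$, exploiting the norm identity of Lemma~\ref{l1}. First I would observe that $\gamma\,l_x$ being a left multiplier is automatic (the map $f\mapsto l_xf$ is always a left multiplier, as noted in the text), so the real content is that $\gamma\,l_x$ is an algebra \emph{isomorphism}; in particular it is surjective, and since $l_{x^{-1}}=l_x^{-1}$, the inverse operator is $\gamma^{-1}l_{x^{-1}}$, which must then also be an algebra homomorphism. The key structural fact is that an algebra homomorphism $T$ on a convolution algebra satisfies $T(f\ast g)=Tf\ast Tg$, whereas a left multiplier satisfies $T(f\ast g)=Tf\ast g$; combining these forces $Tf\ast g=Tf\ast Tg$ for all $f,g$, i.e.\ $Tf\ast(g-Tg)=0$ for all $f,g$. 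By the remark preceding the lemma (using an approximate identity, if $h\ast f=0$ for all $h$ then $f=0$ a.e.), applied with $h$ ranging over the image of $T$ — which is all of $L^p(G,\omega)$ by surjectivity — we get $g-Tg=0$ for every $g$, hence $T=\mathrm{id}$.

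So the heart of the argument is: $\gamma\,l_x=\mathrm{id}$ on $L^p(G,\omega)$. Evaluating on a test function such as $\chi_{U_\alpha}$ for a shrinking basis $\{U_\alpha\}$ of compact neighbourhoods of $e_G$, this gives $\gamma^p\,\|l_x\chi_{U_\alpha}\|_{p,\omega}^p=\|\chi_{U_\alpha}\|_{p,\omega}^p$ for all $\alpha$. Dividing by $\lambda(U_\alpha)$ and letting $\alpha$ shrink, Lemma~\ref{l1} yields $\gamma^p\,\omega^p(x)=\omega^p(e_G)=1$, so $\gamma\,\omega(x)=1$. On the other hand, from $\gamma\,l_x=\mathrm{id}$ directly we read off $l_x=\gamma^{-1}\mathrm{id}=\omega(x)\,\mathrm{id}$; but $l_x$ is a translation, so applying it twice gives $l_{x^2}=\omega(x)^2\,\mathrm{id}$, and inductively $l_{x^n}=\omega(x)^n\,\mathrm{id}$. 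Testing against $\chi_{U_\alpha}$ again and using Lemma~\ref{l1} gives $\omega^p(x^n)=\omega(x)^{np}$, i.e.\ $\omega(x^n)=\omega(x)^n$ for all $n\ge 1$; combined with submultiplicativity $\omega(x^{-1})\ge\omega(x)^{-1}$ and $1=\omega(e_G)\le\omega(x)\omega(x^{-1})$, and the analogous identities for negative powers, one concludes $\omega(x)\ge 1$. If $\omega(x)>1$ then $\gamma=\omega(x)^{-1}<1$, and I claim this contradicts $\gamma\,l_x$ being an isometry‐like… — more simply, I would instead argue directly that $l_x=\omega(x)\,\mathrm{id}$ forces $x=e_G$: take any $f$ supported in a small neighbourhood $V$ of $e_G$; then $l_xf$ is supported in $xV$, and for $l_xf=\omega(x)f$ to hold on a set of positive measure we need $xV\cap V$ to have positive measure for every neighbourhood $V$, which (since $G$ is a topological group with Haar measure) forces $x=e_G$. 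Then $\omega(x)=\omega(e_G)=1$, so $\gamma=1$.

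The main obstacle I anticipate is the clean deduction that $\gamma\,l_x$ being a multiplier \emph{and} an isomorphism forces it to be the identity — one must be careful that the approximate identity of $L^p(G,\omega)$ behaves well (it does not sit inside $L^p(G,\omega)$ in general for $p>1$, but the cited remark already packages exactly what is needed, namely that $h\ast f=0$ for all $h\in L^p(G,\omega)$ implies $f=0$), and that surjectivity of the isomorphism is genuinely used to let $h$ range over the whole algebra. Once $T=\mathrm{id}$ is established, extracting $\gamma=1$ and $x=e_G$ is the routine part: Lemma~\ref{l1} converts the operator identity into the scalar identity $\gamma\,\omega(x)=1$, and the support/translation argument (or equivalently evaluating $l_x$ on a function and comparing supports) gives $x=e_G$, whence $\omega(x)=1$ and $\gamma=1$. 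I would present the support argument rather than the power‐of‐$x$ argument, as it is shorter and avoids any case analysis on the size of $\omega(x)$.
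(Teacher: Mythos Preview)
Your proposal is correct and follows essentially the same approach as the paper: combine the multiplier identity $T(f\ast g)=Tf\ast g$ with the homomorphism identity $T(f\ast g)=Tf\ast Tg$ to get $Tf\ast(g-Tg)=0$, invoke surjectivity of $T$ together with the approximate-identity remark to conclude $g=Tg$ for all $g$, and then read off $x=e_G$ and $\gamma=1$. The paper's final extraction is slightly more direct---it simply evaluates the pointwise identity $g(y)=\gamma\,g(x^{-1}y)$ on $g\in C_c(G)$---so your detour through Lemma~\ref{l1} and the power-of-$x$ discussion is unnecessary, but the support argument you settle on reaches the same conclusion.
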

\begin{proof}
Since $\gamma\, l_x$ is both an algebra isomorphism and a multiplier we have that
$$
\gamma\, l_x(f)\ast g=\gamma l_x(f\ast g)=\gamma\, l_x(f)\ast\gamma\, l_x(g),\qquad f,g\in L^p(G,\omega).
$$
That is,
$$
\gamma \,l_x(f)\ast(g-\gamma\, l_x(g))=0,\qquad f,g\in L^p(G,\omega).
$$
So, $g-\gamma \,l_x(g)=0$, that is $g=\gamma\, l_x(g)$ for every $g\in L^p(G,\omega)$. In particular, for every $g\in C_c(G)$ and $y\in G$, $g(y)=\gamma\, g(xy)$. It now follows that $x=e_G$ and $\gamma=1$.
\end{proof}

\section{ Biseparating algebra isomorphisms between weighted $L^p$-algebras}\label{3}

In this section, we first show that every bicontinuous biseparating algebra isomorphism $T:L^p(G,\omega_1)\to L^p(H,\omega_2)$ is a weighted composition operator. We then use this result to characterize both bipositive algebra isomorphisms ($p\geq1$) and isometric algebra isomorphisms ($p\geq1$, $p\neq2$) between weighted $L^p$-algebras.

 Throughout this section, we assume that $p\ge 1$. An operator $T:L^p(X,\Sigma_1,\mu)\to L^p(Y,\Sigma_2,\nu)$ between measure spaces $(X,\Sigma_1,\mu)$ and $(Y,\Sigma_2,\nu)$ is called disjointness preserving if $f\cdot g=0$  implies that $Tf\cdot Tg=0$. The operator $T$ is called biseparating if $T$ is a bijection and both $T$ and $T^{-1}$ are disjointness preserving.

It is convenient to use the following terminology, as it was done in Definition 2.1 of \cite{MR3589852} in a more restrictive form. Let $(G,\omega_1)$ and $(H,\omega_2)$ be weighted locally compact groups. A standard isomorphism $(\gamma,\phi)$ from $(G,\omega_1)$ onto $(H,\omega_2)$ consists of the following data: an isomorphism of locally compact groups $\phi:G\to H$, and a continuous character $\gamma:G\to \mathbb{C}^+$ such that 
\begin{equation}\label{equ01}
\begin{split}
0<\inf_{x\in G}\;\lvert\gamma(x)\rvert\;\frac{(\omega_2\circ\phi)(x)}{\omega_1(x)}\text{\ \ \ and\ \  \ } \sup_{x\in G}\lvert\gamma(x)\rvert\;\frac{(\omega_2\circ\phi)(x)}{\omega_1(x)}<\infty.
\end{split}
\end{equation}

Let $(G,\omega_1)$ and $(H,\omega_2)$ be weighted locally compact groups, and suppose that $(\gamma,\phi)$ is a standard isomorphism from $(G,\omega_1)$ to $(H,\omega_2)$. It is known that in this case there exists a {\it measure adjustment constant } $c\in\R$ such that
$c=\displaystyle{\frac{\lambda_1(E)}{\lambda_2(\phi(E))}}$ for every non-null measurable set $E\subset G$, where $\lambda_1$ and $\lambda_2$ are Haar measures on $G$ and $H$, respectively. Define $T_{\gamma,\phi}:L^p(G,\omega_1)\to L^p(H,\omega_2)$ by
\begin{equation}\label{T_gamma_phi}
T_{\gamma,\phi}(f)=c\,\gamma\circ\phi^{-1}\cdot f\circ\phi^{-1}.
\end{equation}
It is straightforward to check that $T_{\gamma,\phi}$ is a well defined bicontinuous biseparating algebra isomorphism. In what follows we show that the converse is also true: every bicontinuous biseparating algebra isomorphism is of this form. The main tool for our argument is Proposition \ref{new0} below which is inspired by the Banach-Lamperti theorem stated in \cite[Theorem 3.1]{MR0193531} and also \cite[Theorem 3.2.5]{MR1793753}.

Before we proceed with Proposition \ref{new0}, we need to recall a terminology. Let $(X,\Sigma_1,\mu_1)$ and $(Y,\Sigma_2,\mu_2)$ be measure spaces.  A {\it regular set isomorphism} of $\Sigma_1$ into $\Sigma_2$, defined modulo null sets, is a mapping $\Phi$ satisfying the following conditions:
\begin{enumerate}[(i)]
\item $\Phi(X\setminus A)=\Phi X\setminus \Phi A$ for all $A\in\Sigma_1$;
\item $\Phi(\cup_1^\infty A_n)=\cup_{n=1}^\infty\Phi(A_n)$ for disjoint $A_n\in\Sigma_1$;
\item $\mu_2(\Phi A)=0$ if and only if $\mu_1(A)=0$.\label{this}
\end{enumerate}

 The Banach-Lamperti theorem deals with isometric isomorphisms, but its proof goes by first showing that the map is disjointness preserving, what we impose in Propositions \ref{new0} and \ref{new1} as an assumption. Thus, there is no need to provide a proof of the Proposition.
 
\begin{prop}\label{new0}
Let $(X,\Sigma_1,\mu)$ be a sigma-finite measure space, $(Y,\Sigma_2,\nu)$ be a measure space and $T:L^p(X,\Sigma_1,\mu)\to L^p(Y,\Sigma_2,\nu)$ be a linear disjointness preserving operator. Then there are a regular set isomorphism $\Phi$ and a function $h$ on $Y$ such that for every set $E$ of finite measure
$$
T(\chi_E)=h\cdot\chi_{\Phi(E)}.
$$
\end{prop}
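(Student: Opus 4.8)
The plan is to follow the classical route of the Banach--Lamperti theorem: first reconstruct the set map $\Phi$ from the supports of the images of indicator functions, then glue those images into a single function $h$. First, for $E\in\Sigma_1$ of finite measure I would set $\Phi(E)$ to be the support of $T(\chi_E)$ in $Y$, well defined up to a $\nu$-null set. If $E,F$ are disjoint sets of finite measure, then $\chi_E\cdot\chi_F=0$, so $T(\chi_E)\cdot T(\chi_F)=0$ by hypothesis, i.e. $\Phi(E)$ and $\Phi(F)$ are disjoint modulo a null set; and $\chi_{E\cup F}=\chi_E+\chi_F$ then forces $\Phi(E\cup F)=\Phi(E)\cup\Phi(F)$. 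This already gives monotonicity of $\Phi$ on finite-measure sets and that $\Phi(\emptyset)$ is null.

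Next I would upgrade this to countable additivity. For pairwise disjoint $E_n$ with $\bigcup_nE_n$ of finite measure, monotonicity gives $\bigcup_n\Phi(E_n)\subseteq\Phi(\bigcup_nE_n)$ modulo a null set, and for the reverse inclusion I would use that $\chi_{\bigcup_nE_n}-\chi_{\bigcup_{n\le N}E_n}\to 0$ in $L^p(\mu)$ together with boundedness of $T$ --- which holds in all the situations where we apply the Proposition, $T$ there being bicontinuous --- so that $T(\chi_{\bigcup_{n\le N}E_n})\to T(\chi_{\bigcup_nE_n})$ in $L^p(\nu)$; extracting an a.e.-convergent subsequence then shows $\Phi(\bigcup_nE_n)$ is contained, modulo a null set, in $\bigcup_n\Phi(E_n)$. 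Using $\sigma$-finiteness I would fix a partition $X=\bigsqcup_nX_n$ into finite-measure pieces and define $\Phi(A):=\bigcup_n\Phi(A\cap X_n)$ for arbitrary $A\in\Sigma_1$; countable additivity makes this independent of the partition, properties (i) and (ii) of a regular set isomorphism follow from the above, and (iii) follows from $\chi_A=0\Leftrightarrow\mu(A)=0$ together with the fact that $T$ annihilates no indicator of a non-null set (automatic when $T$, as in our applications, is biseparating, hence injective).

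Finally, to build $h$ I would exploit the compatibility of the functions $T(\chi_E)$: if $E\subseteq F$ are of finite measure, then $\chi_F=\chi_E+\chi_{F\setminus E}$ with $T(\chi_{F\setminus E})$ supported off $\Phi(E)$, so $T(\chi_F)$ and $T(\chi_E)$ agree on $\Phi(E)$. Taking the increasing exhaustion $Y_n:=X_1\cup\cdots\cup X_n$, I would define $h$ on $\Phi(Y_n)$ to be $T(\chi_{Y_n})$; compatibility makes this well defined on $\bigcup_n\Phi(Y_n)=\Phi(X)$, and I would set $h:=0$ off $\Phi(X)$. For $E$ of finite measure one has $T(\chi_{E\cap Y_n})=h\cdot\chi_{\Phi(E\cap Y_n)}$ straight from the construction, and letting $n\to\infty$ --- using once more $\chi_{E\cap Y_n}\to\chi_E$ in $L^p(\mu)$, boundedness of $T$, and $\Phi(E\cap Y_n)\uparrow\Phi(E)$ --- gives $T(\chi_E)=h\cdot\chi_{\Phi(E)}$.

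I expect the real obstacle to be precisely the two limiting steps --- the passage from finite to countable additivity of $\Phi$, and the final passage to the limit in the formula for $h$ --- since these are the only places where one leaves pure bookkeeping with supports modulo null sets and must invoke continuity of $T$. In the Banach--Lamperti setting this is supplied by the isometry hypothesis; here it is supplied by bicontinuity of $T$ in the applications (or, more generally, by automatic continuity of disjointness preserving bijections). The rest is a routine transcription of the argument in \cite[Theorem 3.1]{MR0193531} and \cite[Theorem 3.2.5]{MR1793753}.
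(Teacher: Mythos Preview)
Your proposal is correct and is precisely the approach the paper takes: the paper gives no proof of this proposition beyond the remark that the Banach--Lamperti argument of \cite[Theorem~3.1]{MR0193531} and \cite[Theorem~3.2.5]{MR1793753} applies verbatim, since in that argument the isometry hypothesis is used only to establish disjointness preservation, which is here assumed. Your sketch faithfully reconstructs that argument, and your caveat that the two limiting steps and property~(iii) of the regular set isomorphism require continuity (resp.\ injectivity) of $T$ is accurate and matches the paper's actual use of the proposition under the standing bicontinuity and biseparating hypotheses.
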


Decomposing the group $G$ into cosets by an open sigma-compact subgroup and applying the Banach--Lamperti theorem to every coset, one proves a variation of Banach-Lamperti theorem which we will use:
\begin{prop}\label{new1}
Suppose that $(G,\omega)$ and $(H,\omega_2)$ are weighted locally compact groups and $T:L^p(G,\omega_1)\to L^p(H,\omega_2)$ is a linear disjointness preserving operator. Then there is a regular set isomorphism $\Phi$ between the sigma-algebras of Haar measurable subsets of $G$ and $H$ and a function $h$ on $H$ such that
$$
T(\chi_E)=h\cdot\chi_{\Phi(E)}
$$
for every set $E\subset G$ of finite measure.
\end{prop}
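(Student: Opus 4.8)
The plan is to reduce the statement to the sigma-finite case of Proposition~\ref{new0} by cutting $G$ into the cosets of an open sigma-compact subgroup, applying Proposition~\ref{new0} on each coset, and then patching the resulting data together. First I would fix an open sigma-compact subgroup $G_0\le G$ (for instance the subgroup generated by a compact symmetric neighbourhood of $e_G$), and let $G=\bigsqcup_{i\in I}C_i$ be the partition of $G$ into the left cosets of $G_0$; each $C_i$ is clopen and sigma-compact. Since $\omega_1$ is continuous, hence bounded on compact sets, the restriction of $\omega_1^p\lambda_1$ to each $C_i$ is sigma-finite, so the band $L^p(C_i,\omega_1)\subseteq L^p(G,\omega_1)$ of functions supported in $C_i$ is an $L^p$-space over a sigma-finite measure space. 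By inner regularity of Haar measure one has $\lambda_1(E)=\sum_{i\in I}\lambda_1(E\cap C_i)$ for every Haar-measurable $E\subseteq G$ (a compact subset of $E$ meets only finitely many cosets), so any $E$ of finite measure meets only countably many cosets in positive measure; the same holds in $H$.

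Next, because $T$ is disjointness preserving and distinct cosets are disjoint, $T$ carries each band $L^p(C_i,\omega_1)$ into a band $B_i$ of $L^p(H,\omega_2)$, and the bands $B_i$ are pairwise disjoint. Write $H_i\subseteq H$ for the essential support of $B_i$, so the $H_i$ are pairwise disjoint. Applying Proposition~\ref{new0} to $T|_{L^p(C_i,\omega_1)}\colon L^p(C_i,\omega_1)\to L^p(H,\omega_2)$ — which is legitimate since only the domain is required to be sigma-finite — yields a regular set isomorphism $\Phi_i$ from the Haar-measurable subsets of $C_i$ into those of $H_i$, together with a function $h_i$ on $H_i$ (extended by $0$ to all of $H$), such that $T(\chi_E)=h_i\cdot\chi_{\Phi_i(E)}$ for every $E\subseteq C_i$ of finite measure.

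It remains to glue, which is the delicate step. Set $h=\sum_{i\in I}h_i$, well defined almost everywhere since the $h_i$ have pairwise disjoint supports, and for Haar-measurable $E\subseteq G$ put $\Phi(E)=\bigcup_{i\in I}\Phi_i(E\cap C_i)$. That $\Phi$ is a regular set isomorphism, i.e.\ satisfies (i)--(iii), is routine Boolean bookkeeping using the pairwise disjointness of the sets $H_i=\Phi_i(C_i)$, the countable additivity of each $\Phi_i$, and the additivity of $\lambda_1$ and $\lambda_2$ over the two coset partitions. For the formula $T(\chi_E)=h\cdot\chi_{\Phi(E)}$ when $\lambda_1(E)<\infty$, writing $T(\chi_E)=T(\chi_{E\cap C_i})+T(\chi_{E\setminus C_i})$ and noting that the second summand lies in a band disjoint from $B_i$, hence vanishes on $H_i$, one gets $T(\chi_E)|_{H_i}=h_i\chi_{\Phi_i(E\cap C_i)}=(h\cdot\chi_{\Phi(E)})|_{H_i}$ for every $i$. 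The one point that is not purely formal — and what I expect to be the main obstacle — is checking that $T(\chi_E)$ carries no mass on $H\setminus\bigcup_{i\in I}H_i$, i.e.\ on the part of $H$ unseen by the countably many cosets that $E$ meets; this is where one uses that $T$ is bounded, so that $T$ respects the $L^p$-convergent decomposition $\chi_E=\sum_{i}\chi_{E\cap C_i}$ (a series with only countably many nonzero terms), together with the density of $\operatorname{span}\bigcup_i L^p(C_i,\omega_1)$ in $L^p(G,\omega_1)$. Combining the two observations gives $T(\chi_E)=h\cdot\chi_{\Phi(E)}$, completing the proof.
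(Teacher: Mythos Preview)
Your proposal is correct and follows exactly the approach the paper indicates: the paper's proof consists only of the sentence ``Decomposing the group $G$ into cosets by an open sigma-compact subgroup and applying the Banach--Lamperti theorem to every coset,'' and you have carried this out in detail. The one subtlety you flag---that the gluing step needs boundedness of $T$, which the statement does not literally assume---is a fair observation, but in every use of the proposition in the paper $T$ is bicontinuous, so this is harmless.
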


In the argument of our proof we make use of some particular left multipliers.

\begin{lemma}\label{k}
Every bipositive left multiplier $L$ on $L^1(G,\omega)$ is a positive multiple of a left shift.
\end{lemma}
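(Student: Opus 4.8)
\emph{Overview.} The plan is to imitate Wendel's classical argument in the weighted setting: first realize $L$ as left convolution by a positive measure belonging to $M(G,\omega):=\{\mu:\int_G\omega\,d|\mu|<\infty\}$, and then use bipositivity, together with the inverse multiplier, to force that measure to be concentrated at a single point.

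\emph{Step 1: a representing measure.} Fix a shrinking basis $\{U_\alpha\}$ of compact neighbourhoods of $e_G$ and put $e_\alpha=\lambda(U_\alpha)^{-1}\chi_{U_\alpha}$; by Lemma \ref{lem} (and the computation in Lemma \ref{l1} with $p=1$, $x=e_G$) this is an approximate identity with $\|e_\alpha\|_{1,\omega}\to 1$, so
$$
L(f)=\lim_\alpha L(e_\alpha\ast f)=\lim_\alpha L(e_\alpha)\ast f\qquad(f\in L^1(G,\omega)).
$$
Set $g_\alpha:=L(e_\alpha)$. Positivity of $L$ gives $g_\alpha\ge 0$, and the net $(g_\alpha)$ is eventually norm-bounded in $L^1(G,\omega)$; since $\omega$ is locally bounded below, $(g_\alpha\,d\lambda)$ is then a net of positive Radon measures with uniformly bounded mass on each compact set. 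Passing to a subnet, $g_\alpha\,d\lambda\to\mu$ vaguely for a positive Radon measure $\mu$; testing continuous compactly supported minorants of $\omega$ against $\mu$ shows $\int_G\omega\,d\mu<\infty$, i.e.\ $\mu\in M(G,\omega)$. For $f\in C_c(G)$ and $y\in G$ the map $s\mapsto f(s^{-1}y)$ lies in $C_c(G)$, so vague convergence gives $(g_\alpha\ast f)(y)\to(\mu\ast f)(y)$ pointwise; a uniform local bound on $g_\alpha\ast f$ then lets one integrate against any $\psi\in C_c(G)$, and comparison with the norm limit forces $L(f)=\mu\ast f$ for $f\in C_c(G)$, hence for all $f\in L^1(G,\omega)$ by density and boundedness of $f\mapsto\mu\ast f$. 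In particular $\mu\ne 0$.

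\emph{Step 2: concentration at a point.} Since $L$ is bipositive, $L^{-1}$ is again a bipositive left multiplier, so by Step 1 we have $L^{-1}(f)=\nu\ast f$ for a nonzero positive $\nu\in M(G,\omega)$. From $LL^{-1}=\mathrm{id}$ we get $(\mu\ast\nu)\ast f=f$ for all $f$; convolving on the right with $e_\alpha$ (or applying the remark preceding Lemma \ref{lastlemma} to $(\mu\ast\nu-\delta_{e_G})\ast h$) yields $\mu\ast\nu=\delta_{e_G}$. For positive Radon measures, $\mathrm{supp}(\mu)\cdot\mathrm{supp}(\nu)\subseteq\mathrm{supp}(\mu\ast\nu)=\{e_G\}$: if $s\in\mathrm{supp}(\mu)$ and $t\in\mathrm{supp}(\nu)$, every neighbourhood of $st$ contains a product $V_1V_2$ with $V_1\ni s$, $V_2\ni t$ relatively compact and $\mu(V_1)\nu(V_2)>0$, so $st\in\mathrm{supp}(\mu\ast\nu)$. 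Fixing $t_0\in\mathrm{supp}(\nu)$ now forces $\mathrm{supp}(\mu)=\{x\}$ with $x:=t_0^{-1}$, and a nonzero locally finite positive measure carried by $\{x\}$ equals $\gamma\,\delta_x$ for some $\gamma>0$. Hence $L(f)=\gamma\,\delta_x\ast f=\gamma\,l_x f$, as claimed.

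\emph{Main obstacle.} The substantive point is Step 1: for a general weight (which may grow or decay) one cannot work in the ordinary measure algebra $M(G)$ and must produce the representing object inside $M(G,\omega)$, checking both that the vague limit stays in $M(G,\omega)$ and that $g_\alpha\ast f$ converges to $\mu\ast f$ strongly enough to be identified with the norm limit $L(f)$. Positivity of $L$ is exactly what makes this work — it guarantees $g_\alpha\ge 0$, so the vague limit is a genuine positive measure and the $M(G,\omega)$-bound is inherited by it. Once $\mu$ and $\nu$ are in hand, Step 2 is essentially formal.
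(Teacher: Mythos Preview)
Your argument is correct and follows exactly the route of the paper's proof, which simply quotes \cite[Lemma~4.1]{MR3589852} for Step~1 (every positive left multiplier on $L^1(G,\omega)$ is convolution by a positive $\mu\in M(G,\omega)$) and \cite[Proposition~1.3(i)]{MR3589852} for Step~2 (a positive measure with a positive convolution inverse is a multiple of a point mass); you have reproduced these two ingredients in a self-contained way via Wendel's method and a support computation.

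One small technical caveat in Step~1: the clause ``a uniform local bound on $g_\alpha*f$ then lets one integrate against any $\psi\in C_c(G)$'' is implicitly appealing to dominated convergence along a net, which is not valid in general. The painless repair is to swap the order of integration first: for $\psi\in C_c(G)$ one has
\[
\int_G \psi(y)\,(g_\alpha*f)(y)\,dy \;=\; \int_G g_\alpha(s)\,h(s)\,ds,\qquad h(s)=\int_G \psi(y)f(s^{-1}y)\,dy,
\]
and $h\in C_c(G)$, so vague convergence of $g_\alpha\,d\lambda$ to $\mu$ gives the desired limit $\int_G\psi\,(\mu*f)\,d\lambda$ directly; comparison with the norm limit $L(f)$ (tested against $\psi$, using that $\psi/\omega\in L^\infty$) then yields $L(f)=\mu*f$.
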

\begin{proof}
Let $L$ be a bipositive left multiplier on $L^1(G,\omega)$. Then by \cite[Lemma 4.1]{MR3589852}, there is a positive measure $\mu\in M(G,\omega)$ such that $L(f)= \mu\ast f$, $f\in L^1(G,\omega)$. Since $L^{-1}$ is also a positive left multiplier, it follows that $\mu$ is an invertible positive measure with a positive inverse. Therefore by \cite[Proposition 1.3.(i)]{MR3589852}, $\mu$ must be a positive multiple of a pointmass, say $\alpha\delta_x$, and therefore $L(f)=\alpha l_x(f)$, for every $f\in L^1(G,\omega)$.     
\end{proof}

\begin{lemma}\label{k4}
Every bicontinuous biseparating left multiplier $L$ on $L^p(G,\omega)$ is a scalar multiple of a left shift.
\end{lemma}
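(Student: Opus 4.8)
The plan is to combine Proposition~\ref{new1} with the multiplier characterization in Lemma~\ref{lemma1}. Since $L$ is biseparating it is in particular disjointness preserving, so Proposition~\ref{new1} furnishes a regular set isomorphism $\Phi$ of the Haar-measurable subsets of $G$ together with a measurable function $h$ on $G$ such that $L(\chi_E)=h\cdot\chi_{\Phi(E)}$ for every $E\subset G$ of finite measure. By linearity and continuity every element of the range of $L$ then has the form $h\cdot g$ for some measurable $g$; since $L$ is onto $L^p(G,\omega)$, this forces $h\neq 0$ almost everywhere, and it also forces the sets $\Phi(E)$, with $E$ of finite measure, to exhaust $G$ modulo null sets.

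Next I would feed the multiplier property into this representation. By Lemma~\ref{lemma1}, $Lr_x=r_xL$ for every $x\in G$. Applying both sides to $\chi_E$ and using $r_x\chi_E=\chi_{Ex}$ gives $h\cdot\chi_{\Phi(Ex)}=(r_xh)\cdot\chi_{(\Phi E)x}$. As $h$ is nonzero almost everywhere, comparing essential supports yields $\Phi(Ex)=\Phi(E)x$ modulo null sets for all $E$ and all $x$, while on the common support one reads off $r_xh=h$ a.e. Letting $E$ range over a countable family of finite-measure sets whose $\Phi$-images cover $G$, we get $r_xh=h$ a.e.\ on $G$ for every $x\in G$. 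A locally integrable function invariant under all right translations is a.e.\ constant (its averages over a shrinking basis of neighbourhoods of $e_G$ do not depend on the translating element and converge to it a.e.\ by Lebesgue differentiation), so $h\equiv\alpha$ for a scalar $\alpha\neq0$.

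It remains to identify $\Phi$. Here I would use the fact that a regular set isomorphism of a $\sigma$-finite measure space is induced by a point isomorphism modulo null sets, which is precisely what underlies the Banach--Lamperti theorem already used for Propositions~\ref{new0}--\ref{new1}; for a general locally compact $G$ one first reduces to the $\sigma$-compact case by decomposing $G$ into cosets of an open $\sigma$-compact subgroup, exactly as in Proposition~\ref{new1}. Writing $\tau$ for the point map with $\Phi(E)=\tau^{-1}(E)$ modulo null sets, the relation $\Phi(Ex)=\Phi(E)x$ becomes $\tau(yx)=\tau(y)x$ for a.e.\ $y$ and every $x\in G$; a direct computation then shows that $y\mapsto\tau(y)y^{-1}$ is itself invariant under all right translations, hence a.e.\ equal to a fixed element of $G$. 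Thus $\Phi(E)=x_0E$ modulo null sets for some $x_0\in G$, so $L(\chi_E)=\alpha\,\chi_{x_0E}=\alpha\, l_{x_0}(\chi_E)$ for every finite-measure $E$; since the linear span of these $\chi_E$ is dense in $L^p(G,\omega)$ and both operators are bounded, $L=\alpha\, l_{x_0}$.

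I expect the identification of $\Phi$ with a left shift to be the main obstacle: passing from the abstract equivariant regular set isomorphism to an honest left translation, and, in the non-$\sigma$-compact case, organizing the coset decomposition so that one and the same shift works simultaneously on all cosets. The two other ingredients — the support comparison and the rigidity of translation-invariant functions — are routine.
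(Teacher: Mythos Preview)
Your treatment of the function $h$ is essentially the paper's: apply $Lr_x=r_xL$ to $\chi_E$, compare supports, and deduce that $h$ is right-invariant, hence constant. The divergence comes afterwards, in how you identify $\Phi$.

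The paper does \emph{not} lift $\Phi$ to a point map. Instead, once $h\equiv d$ it observes that $\frac1d L$ takes $\chi_E$ to $\chi_{\Phi(E)}$, and hence defines a bounded \emph{bipositive} left multiplier on $L^1(G,\omega)$. Lemma~\ref{k} then applies: a bipositive multiplier of $L^1(G,\omega)$ is convolution by a positive measure in $M(G,\omega)$ whose inverse is also positive, and such a measure is necessarily a point mass. This is a purely measure-algebra argument and never requires realizing $\Phi$ by a point transformation.

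Your route through a point map $\tau$ has a genuine gap. The assertion that a regular set isomorphism of a $\sigma$-finite measure space is induced by a point bijection is \emph{not} what underlies Propositions~\ref{new0}--\ref{new1} (those work entirely at the level of the set isomorphism), and it is false without an additional separability hypothesis: one needs the measure algebra to be countably generated, which for Haar measure amounts to $G$ being second countable. A general locally compact group---even a $\sigma$-compact or compact one, such as $\{0,1\}^I$ for uncountable $I$---need not satisfy this, and the coset decomposition from Proposition~\ref{new1} does not help, since each coset may still fail to be Polish. So as written your argument only covers second-countable $G$; the paper's reduction to $L^1$ and Lemma~\ref{k} avoids the point-realization problem entirely and yields the result in full generality. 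If you want to salvage your approach, you would need either to restrict to second-countable groups or to run the right-equivariance argument for $\Phi$ directly at the level of the measure algebra, which in the end is close to reproving the relevant part of Lemma~\ref{k}.
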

\begin{proof}
Let $L:L^p(G,\omega)\to L^p(G,\omega)$ be a biseparating left multiplier. We will show that $L$ generates a bipositive left multiplier $L^\prime$ on $L^1(G,\omega)$ such that for a constant $d$ we have that $\frac{1}{d}L=L^\prime$ on characteristic functions of open sets with finite measure. By Lemma \ref{k}, $L^\prime$ is then a scalar multiple of a left shift and therefore $L$ must be a scalar multiple of a left shift.

By Proposition \ref{new1}, there is a measurable function $h$ on $G$ such that for each set of finite measure $E$, we have that $L(\chi_E)=h\cdot \chi_{\Phi(E)}$. We will show that $h$ is constant.

First note that $\Phi(G)=G$ (up to a null set), otherwise one could pick a subset $Y\subset G\setminus\Phi(G)$ with $0<\lambda(Y)<\infty$, and then we would have $\chi_Y$ and $L(\chi_E)$ have disjoint supports for every measurable subset $E$ of finite measure, whence $L^{-1}(\chi_Y)$ and $\chi_E$ have disjoint supports, and thus $L^{-1}(\chi_Y)=0$; but this contradicts the assumption that $L^{-1}$ is a bijection.

Now suppose that $h$ is not constant. Then there exist $x\in G$ and a set $E$ of nonzero finite measure such that $r_xh(y)\neq h(y)$ for $y\in E$. Moreover, we can assume that $\lambda(\Phi^{-1}(E))<\infty$ and $\lambda(\Phi^{-1}(Ex^{-1}))<\infty$. Set $F=\Phi^{-1}(Ex^{-1})\cup\Phi^{-1}(E)x^{-1}$, then $Ex^{-1}\subseteq \Phi(F)$ and $E\subseteq\Phi(Fx)$. For $y\in E$, 
we have 
$$
r_xL(\chi_F)(y)=L\chi_F(yx^{-1})=h(yx^{-1})\chi_{\Phi(F)}(yx^{-1})=h(yx^{-1})
$$
and  
$$
Lr_x\chi_F(y)=L\chi_{Fx}(y)=h(y)\chi_{\Phi(Fx)}=h(y).
$$
Thus, $h(yx^{-1})=h(y)$ for almost every $y\in E$, which is a contradiction. Therefore, there is a constant $d$ such that 
\begin{equation}\label{s1}
L(\chi_{E})=d\chi_{\Phi(E)}
\end{equation}
for every measurable set $E\subset G$ of finite measure. Since $d\lambda(\Phi(E))^{1/p}\le\|L\|\lambda(E)^{1/p}$, it follows easily that $\frac{1}{d}L$ is a bounded map on every $L^p$ class, in particular on $L^1$. From \eqref{s1} it is seen that $\frac{1}{d}L$ is bipositive, and hence by Lemma \ref{k}, $L$ is a multiple of a left shift.
\end{proof}

\begin{theorem}\label{bithm}
Let $(G, \omega_1)$ and $(H, \omega_2)$ be weighted locally compact groups. Let $T:L^p(G,\omega_1)\to L^p(H,\omega_2)$ be a bicontinuous biseparating algebra isomorphism. Then there exists a standard isomorphism $(\gamma,\phi)$ from $(G,\omega_1)$ to $(H,\omega_2)$, such that $T=T_{\gamma,\phi}$.
\end{theorem}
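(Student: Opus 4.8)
The plan is to reconstruct the pair $(\gamma,\phi)$ from the way $T$ transports left translations, and then to identify $T$ with $T_{\gamma,\phi}$ by a rigidity argument for multipliers.

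\emph{Producing $\gamma$ and $\phi$.} For $x\in G$ set $S_x:=T\,l_x\,T^{-1}$ on $L^p(H,\omega_2)$. Since $T$ and $T^{-1}$ are algebra isomorphisms and $l_x$ is a left multiplier of $L^p(G,\omega_1)$, a one-line computation shows that $S_x$ is a left multiplier of $L^p(H,\omega_2)$; it is bicontinuous and biseparating as a composition of such maps, with $S_x^{-1}=T\,l_{x^{-1}}\,T^{-1}$. Lemma \ref{k4} then gives $S_x=\gamma(x)\,l_{\phi(x)}$ for a nonzero scalar $\gamma(x)$ and some $\phi(x)\in H$; evaluating both sides on $\chi_V$ for small neighbourhoods $V$ of $e_H$ shows $\gamma(x)$ and $\phi(x)$ are uniquely determined. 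From $l_xl_y=l_{xy}$ we get $S_{xy}=S_xS_y$, whence by uniqueness $\phi(xy)=\phi(x)\phi(y)$ and $\gamma(xy)=\gamma(x)\gamma(y)$, while $S_{e_G}=\mathrm{id}=1\cdot l_{e_H}$ gives $\phi(e_G)=e_H$, $\gamma(e_G)=1$. Thus $\phi:G\to H$ is a group homomorphism and $\gamma$ a nonvanishing character. Running the same construction on $T^{-1}$ yields a homomorphism $\sigma:H\to G$, and substituting one relation into the other forces $\sigma\circ\phi=\mathrm{id}_G$, $\phi\circ\sigma=\mathrm{id}_H$, so $\phi$ is bijective with $\phi^{-1}=\sigma$.

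\emph{Continuity and the standard isomorphism condition.} The map $x\mapsto S_x$ is continuous from $G$ into $B(L^p(H,\omega_2))$ for the strong operator topology, because $S_xg=T\bigl(l_x(T^{-1}g)\bigr)$ and $x\mapsto l_x(T^{-1}g)$ is continuous by Lemma \ref{lem}. Applying this with $g=\chi_V$ for a fixed compact symmetric neighbourhood $V$ of $e_H$ gives $\gamma(x)\chi_{\phi(x)V}\to\gamma(x_0)\chi_{\phi(x_0)V}$ in $L^p(H,\omega_2)$ whenever $x\to x_0$. If $\phi$ were discontinuous at some $x_0$ there would be a net $x_i\to x_0$ with $\phi(x_i)$ outside a fixed neighbourhood of $\phi(x_0)$; passing to a subnet, either $\phi(x_i)\to y_1\neq\phi(x_0)$, or $\phi(x_i)$ eventually leaves every compact set. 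In the first case pick $V$ so small that $\phi(x_0)V$ and $y_1V$ are disjoint; by Lemma \ref{lem} $\chi_{\phi(x_i)V}\to\chi_{y_1V}$, and comparing norms forces $|\gamma(x_i)|$ to a strictly positive limit, so along a further subnet $\gamma(x_i)\chi_{\phi(x_i)V}$ converges to a nonzero function supported in $y_1V$, contradicting convergence to a nonzero function supported in the disjoint set $\phi(x_0)V$. In the second case the supports $\phi(x_i)V$ and $\phi(x_0)V$ are eventually disjoint, so the norm of the difference stays bounded below, again a contradiction. Hence $\phi$ is continuous, and by symmetry so is $\phi^{-1}$; then $\chi_{\phi(x)V}\to\chi_{\phi(x_0)V}$ in norm, and since this norm is bounded away from $0$, the displayed convergence yields $\gamma(x)\to\gamma(x_0)$. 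Finally, using $\|l_x\|_{L^p(G,\omega_1)}=\omega_1(x)$ and $\|l_{\phi(x)}\|_{L^p(H,\omega_2)}=\omega_2(\phi(x))$ (which follow from Lemma \ref{l1} and the elementary bound $\|l_y\|\le\omega(y)$), the sandwich $\|l_x\|\,(\|T\|\|T^{-1}\|)^{-1}\le\|S_x\|\le\|l_x\|\,\|T\|\,\|T^{-1}\|$ rewrites as $(\|T\|\|T^{-1}\|)^{-1}\le|\gamma(x)|\,\omega_2(\phi(x))/\omega_1(x)\le\|T\|\|T^{-1}\|$, which is precisely \eqref{equ01}. So $(\gamma,\phi)$ is a standard isomorphism and $T_{\gamma,\phi}$ is defined.

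\emph{Identifying $T$ with $T_{\gamma,\phi}$.} A direct computation from \eqref{T_gamma_phi}, using that $\gamma$ is a character, shows $T_{\gamma,\phi}\,l_x=\gamma(x)\,l_{\phi(x)}\,T_{\gamma,\phi}$, the same relation satisfied by $T$. Hence $R:=T_{\gamma,\phi}^{-1}\circ T$, a bicontinuous bijective algebra automorphism of $L^p(G,\omega_1)$, commutes with every left translation $l_x$. By the evident left--right symmetric version of \cite[Lemma 4.1]{MR3589852}, such an operator is given by right convolution with a measure, $Rf=f\ast\mu$, and since $R$ is invertible $\mu$ is invertible in the corresponding measure algebra. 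Writing out that $R$ is multiplicative, $(f\ast g)\ast\mu=(f\ast\mu)\ast(g\ast\mu)$, and cancelling the inverse of $\mu$ on the right gives $(f-f\ast\mu)\ast g=0$ for all $f,g\in L^p(G,\omega_1)$; applying this with $g$ running through a two-sided approximate identity (as in the remark preceding Lemma \ref{lastlemma}) yields $f\ast\mu=f$ for all $f$, whence $\mu=\delta_{e_G}$ and $R=\mathrm{id}$, i.e.\ $T=T_{\gamma,\phi}$.

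\emph{Main obstacle.} Apart from the proof that the abstract group isomorphism $\phi$ is a homeomorphism, every step is either a short algebraic manipulation or an appeal to the results quoted above. The topological statement has to be extracted from the strong-operator continuity of $x\mapsto S_x$, and the delicate point is the disjoint-support dichotomy of the second paragraph---in particular ruling out that $\phi(x_i)$ escapes to infinity along a net that converges in $G$.
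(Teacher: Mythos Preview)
Your overall strategy---conjugating left translations through $T$, invoking Lemma~\ref{k4} to produce $(\gamma,\phi)$, verifying continuity and the bounds \eqref{equ01}, and then killing the residual operator by a multiplier argument---is exactly the paper's. Your derivation of \eqref{equ01} via $\|l_x\|=\omega(x)$ is in fact slightly slicker than the paper's computation with shrinking neighbourhoods.

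There is, however, a genuine gap in your identification step. You assert that an operator on $L^p(G,\omega_1)$ commuting with every $l_x$ is given by right convolution with a measure, citing ``the evident left--right symmetric version of \cite[Lemma~4.1]{MR3589852}''. That lemma is about $L^1$; for $p>1$ the conclusion is simply false (e.g.\ the Hilbert transform on $L^2(\R)$ commutes with translations but is not convolution with any measure). Your subsequent cancellation argument with $\mu$ therefore rests on an unjustified representation.

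The paper circumvents this by working on the other side: one checks directly that $TT_{\gamma,\phi}^{-1}$ commutes with every \emph{right} translation $r_y$, so by Lemma~\ref{lemma1} it is a \emph{left} multiplier of $L^p(H,\omega_2)$; being also bicontinuous biseparating, Lemma~\ref{k4} gives $TT_{\gamma,\phi}^{-1}=\zeta l_z$, and Lemma~\ref{lastlemma} forces $\zeta=1$, $z=e_H$. Your route is easily repaired along the same lines: from $Rl_x=l_xR$ the symmetric version of Lemma~\ref{lemma1} makes $R$ a \emph{right} multiplier, i.e.\ $R(f\ast g)=f\ast R(g)$; comparing with $R(f\ast g)=R(f)\ast R(g)$ gives $(f-R(f))\ast R(g)=0$ for all $f,g$, and surjectivity of $R$ together with the approximate-identity remark preceding Lemma~\ref{lastlemma} yields $R=\mathrm{id}$---no measure representation needed.
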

\begin{proof}
Let $T:L^p(G,\omega_1)\to L^p(H,\omega_2)$ be a biseparating algebra isomorphism. For every $x\in G$, the mapping 
$$Tl_xT^{-1}:L^p(H,\omega_2)\to L^p(H,\omega_2)$$
is a biseparating left multiplier. Therefore, by Lemma \ref{k4}, $Tl_xT^{-1}$ is a scalar multiple of a left translation on $L^p(H,\omega_2)$. Let $\phi:G\to H$ and $\gamma:G\to\mathbb{C}$ be such that $$Tl_xT^{-1}=\gamma(x)l_{\phi(x)}.$$

Let $\{U_\alpha\}_{\alpha\in I}$ be a shrinking basis of symmetric precompact neighbourhoods of identity in $H$ all contained in some fixed precompact neighbourhood of identity, say $U$. Since $T$ and $T^{-1}$ are bounded operators and $\|l_x\|\leq\omega_1(x)$, we have, for every $\alpha$
\begin{equation*}
\begin{split}
\|\gamma(x)l_{\phi(x)}(\chi_{U_\alpha})\|_{p,\omega_2}&=\|Tl_xT^{-1}(\chi_{U_\alpha})\|_{p,\omega_2}\leq\|T\|\;\|T^{-1}\|\;\omega_1(x)\;\|\chi_{U_\alpha}\|_{p,\omega_2}.
\end{split}
\end{equation*}
Since by Lemma \ref{l1}, $\displaystyle{\frac{1}{\lambda_2(U_\alpha)}\|l_{\phi(x)}(\chi_{U_\alpha})\|_{p,\omega_2}^p\to\omega_2^p(\phi(x))} \text{ and }\displaystyle{\frac{1}{\lambda_2(U_\alpha)}\|\chi_{U_\alpha}\|_{p,\omega_2}^p\to1},$ we have that
\begin{equation}\label{long}
\begin{split}
\lvert\gamma(x)\rvert\;\omega_2(\phi(x))&=\lim_{\alpha}\frac{\lvert\gamma(x)\rvert\;\| l_{\phi(x)}(\chi_{{U_\alpha}})\|_{p,\omega_2}}{\lambda_2(U_\alpha)^{\frac{1}{p}}}\\
&\le\lim_\alpha\frac{\|T\|\;\|T^{-1}\|\;\omega_1(x)\;\|\chi_{U_\alpha}\|_{p,\omega_2}}{\lambda_2(U_\alpha)^{\frac{1}{p}}}\leq\|T\|\;\|T^{-1}\|\omega_1(x).
\end{split}
\end{equation}
Let now $\{V_\alpha\}_{\alpha\in J}$ be a shrinking basis of symmetric precompact neighbourhoods of identity in $G$.
Since $T^{-1}(Tl_xT^{-1})T(\chi_{V_\alpha})=l_x(\chi_{V_\alpha})$, we have
\begin{equation*}
\begin{split}
\|l_x(V_{\alpha})\|_{p,\omega_1}=\|T^{-1}(Tl_xT^{-1})T(\chi_{V_\alpha})\|_{p,\omega_1}&\leq\|T^{-1}\|\|\gamma(x)\; l_{\phi(x)}T(\chi_{V_\alpha})\|_{p,\omega_2}\\
&\leq\|T^{-1}\|\;\lvert\gamma(x)\rvert\;\omega_2(\phi(x))\;\|T(\chi_{V_\alpha})\|_{p,\omega_2}\\
&\leq\|T^{-1}\|\;\lvert\gamma(x)\rvert\;\omega_2(\phi(x))\;\|T\|\;\|\chi_{V_\alpha}\|_{p,\omega_1}.
\end{split}
\end{equation*}
Using the same argument as in (\ref{long}), we have that
\begin{equation}\label{gamma-omega}
\omega_1(x)\leq\|T^{-1}\|\;\|T\|\;\lvert\gamma(x)\rvert\;\omega_2(\phi(x)).
\end{equation}
Thus, for the mappings $\phi:G\to H$ and $\gamma:G\to\mathbb{C}^*$ we have that 
\begin{equation}\label{equ01}
\begin{split}
0<\inf_{x\in G}\lvert\gamma(x)\rvert\;\frac{(\omega_2\circ\phi)(x)}{\omega_1(x)}\text{\ \ \ and\ \  \ } \sup_{x\in G}\lvert\gamma(x)\rvert\;\frac{(\omega_2\circ\phi)(x)}{\omega_1(x)}<\infty.
\end{split}
\end{equation}
We show next that $\phi:G\to H$ is an isomorphism of locally compact groups and $\gamma:G\to\mathbb{C}^*$ is a continuous character. First we show that $\phi$ and $\gamma$ are homomorphisms. To see this, first note that for every $x$ and $y$ in $G$ we have that
\begin{equation}\label{eq1}
\gamma(xy)l_{\phi(xy)}=Tl_{xy}T^{-1}=Tl_xT^{-1}Tl_yT^{-1}=\gamma(x)\gamma(y)l_{\phi(x)\phi(y)}.
\end{equation}
Let $f\in C_c(H)$ be such that $f(\phi(xy)^{-1})=f(\phi(y)^{-1}\phi(x)^{-1})=1.$ Then using equation (\ref{eq1})
\begin{equation*}
\begin{split}
\gamma(xy)&=\gamma(xy)l_{\phi(xy)}f(e_G)=\gamma(x)\gamma(y)l_{\phi(x)\phi(y)}f(e_G)=\gamma(x)\gamma(y),
\end{split}
\end{equation*}
and therefore $\gamma$ is a homomorphism. It now follows from equation (\ref{eq1}) that $l_{\phi(xy)}=l_{\phi(x)\phi(y)}$. In particular this means that for every function $f\in C_c(H)$ we have that $$f(\phi(xy)^{-1})=l_{\phi(xy)}f(e_G)=l_{\phi(x)\phi(y)}f(e_G)=f(\phi(y)^{-1}\phi(x)^{-1}).$$
Since $G$ is a completely regular topological space, we must have that $\phi(xy)^{-1}=\left(\phi(x)\phi(y)\right)^{-1}$, and therefore, $\phi$ is a homomorphism. 

Now we show that $\phi$ and $\gamma$ are continuous. To see this, suppose that $x$ in $G$ is given, and suppose that $(x_\alpha)$ is a net in $G$ converging to $x$. We first show that $\phi(x_\alpha)\to\phi(x)$. Let $\phi(x)U$ be a compact neighbourhood of $\phi(x)$. Let $W$ be a neighbourhood of the identity such that $WW^{-1}\subseteq U$. Then for every $f\in L^p(H,\omega_2)$, using Lemma \ref{lem} and since both $T$ and $T^{-1}$ are bounded, we have that $Tl_{x_{\alpha}} T^{-1}(f)\xrightarrow{\|.\|_{p,\omega_2}} Tl_xT^{-1}(f)$, that is $\gamma(x_{\alpha})l_{\phi(x_{\alpha})}f\xrightarrow{\|.\|_{p,\omega_2}}\gamma(x)l_{\phi(x)}f$. 
 Setting $f=\chi_W$, we see that for $\epsilon_0:=\|\gamma(x)l_{\phi(x)}(\chi_W)\|_{p,\omega_2}$ there is $\alpha_0$ such that
$$
\|\gamma(x_{\alpha})l_{\phi(x_{\alpha})}(\chi_W)-\gamma(x)l_{\phi(x)}(\chi_W)\|_{p,\omega_2}<\epsilon_0
$$
for every $\alpha>\alpha_0$. We claim that for every $\alpha>\alpha_0$, $\phi(x_{\alpha})$ belongs to $\phi(x)U$. Suppose on the contrary, that for some $\alpha_1>\alpha_0$, $\phi(x_{\alpha_1})\not\in \phi(x)U$. Then, $\phi(x_{\alpha_1})W\cap \phi(x)W=\emptyset$, so the functions $l_{\phi(x_{\alpha_1})}(\chi_W)$ and $l_{\phi(x)}(\chi_W)$ have disjoint supports. Therefore,
\begin{equation*}
\begin{split}
\|\gamma(x_{\alpha_i})l_{\phi(x_{\alpha_i})}(\chi_W)-\gamma(x)l_{\phi(x)}(\chi_W)\|_{p,\omega_2}&=\|\gamma(x_{\alpha_i})l_{\phi(x_{\alpha_i})}(\chi_W)\|_{p,\omega_2}+\|\gamma(x)l_{\phi(x)}(\chi_W)\|_{p,\omega_2}\\
&\geq \|\gamma(x)l_{\phi(x)}(\chi_W)\|_{p,\omega_2}=\epsilon_0,
\end{split}
\end{equation*}
a contradiction. Thus, we must have $\phi(x_{\alpha})\to\phi(x)$.

We show next that $(\gamma(x_\alpha))$ converges to $\gamma(x)$. Suppose that on the contrary $\gamma(x_\alpha)\not\to\gamma(x)$. Then by going through a subnet we can assume that there is $\epsilon_0>0$ such that $\lvert\gamma(x_\alpha)\to\gamma(x)\rvert>\epsilon_0$. Since $\phi(x_\alpha)\to\phi(x)$, there is a compact neighbourhood of identity $W$ such that $\phi(x_\alpha)\in \phi(x)WW^{-1}$ (note that $\phi(x_\alpha)W\cap\phi(x)W$ is a nonempty compact set). Set $m:=\inf\{\omega_2^p(y) : y\in \phi(x_\alpha)^{-1}W\cap\phi(x)^{-1}W \}$, then we have that
\begin{equation*}
\begin{split}
\|\gamma(x_\alpha)l_{\phi(x_\alpha)}(\chi_W)-\gamma(x)l_{\phi(x)}(\chi_W)\|^p_{p,\omega}&=\int\lvert\gamma(x_\alpha)\chi_{\phi(x_\alpha)W}-\gamma(x)\chi_{\phi(x)W}\rvert^p(y)\omega_2^p(y)\, dy\\
&\geq m\int_{\phi(x_\alpha)W\cap\phi(x)W}\lvert\gamma(x_\alpha)-\gamma(x)\rvert^p\ dy\geq m\epsilon_0^p,
\end{split}
\end{equation*}
a contradiction. So,  $(\gamma(x_\alpha))$ converges to $\gamma(x)$.

Now we show that $\phi$ is a homeomorphism. Applying the reasoning above to $T^{-1}$, we conclude that there are continuous homomorphisms $\phi^{\prime}:H\to G$ and $\gamma^{\prime}:H\to \mathbb{C}$ such that $T^{-1}l_yT=\gamma^{\prime}(y) l_{\phi^{\prime}(y)}$ for every $y\in H$. Now, for each $g\in L^p(H,\omega_2)$, if we let $f:=T^{-1}(g)$, we have that
\begin{equation*}
\begin{split}
l_yg=T(T^{-1}l_yTf)&=T(\gamma^{\prime}(y)l_{\phi^{\prime}(y)}f)\\
&=T(\gamma^{\prime}(y)l_{\phi^{\prime}(y)}T^{-1}g)=\gamma^{\prime}(y) \gamma\circ\phi^{\prime}(y)l_{\phi\circ\phi^{\prime}(y)}g.
\end{split}
\end{equation*}
Thus, $\phi(\phi^{\prime}(y))=y$, for every $y$ in $H$. Similarly, we can see that $\phi^{\prime}(\phi(x))=x$, for every $x$ in $G$. Therefore, $\phi$ is a bijection. Since $\phi^{\prime}$ is continuous, $\phi$ is a homeomorphism.

Now, consider the biseparating algebra isomorphism $T_{\gamma,\phi}:L^p(G,\omega_1)\to L^p(H,\omega_2)$. A straightforward calculation shows that for every $y$ in $H$,
$$
TT_{\gamma,\phi}^{-1}r_yT_{\gamma,\phi}T^{-1}=r_y.
$$
Therefore, by Lemma \ref{lemma1}, the biseparating algebra isomorphism $TT_{\gamma,\phi}^{-1}$ is also a left multiplier. It follows from Theorem \ref{k4} that there exist a constant $\zeta$ and an element $z$ in $H$ such that $TT_{\gamma,\phi}^{-1}=\zeta l_z$. Now by Lemma \ref{lastlemma} we have that $TT_{\gamma,\phi}^{-1}$ is the identity operator on $L^p(H,\omega_2)$, and so $T=T_{\gamma,\phi}$.
\end{proof}

We now proceed to bipositive algebra isomorphisms and isometric algebra isomorphisms. An element $f\in L^p(G,\omega)$ is called positive if $f(x)\geq0$, $\lambda$-a.e.\ for $x\in G$. An operator $T:L^p(G,\omega_1)\to L^p(H,\omega_2)$ is positive if $T(f)\geq0$ whenever $f\geq0$. The operator $T$ is called bipositive if $T$ is a bijection and both $T$ and $T^{-1}$ are positive operators. 

\begin{prop}\label{p}
Let $T:L^p(G,\omega_1)\to L^p(H,\omega_2)$ be an algebra isomorphism such that $T$ is bipositive, or $T$ is an isometry and $p\neq2$; then $T$ is bicontinuous and biseparating.
\end{prop}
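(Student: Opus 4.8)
The plan is to treat the two hypotheses separately; in each case bicontinuity is essentially immediate, and the content lies in showing that both $T$ and $T^{-1}$ are disjointness preserving. Throughout I would use that $L^p(G,\omega)$ is literally the Lebesgue space of the measure $\omega^p\lambda$ on $G$, so that the Banach-lattice structure and the pointwise Clarkson/Lamperti inequalities apply verbatim; the only measure-theoretic fact needed below is that a non-negative integrable function with vanishing integral is zero almost everywhere, which holds for an arbitrary measure.

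\emph{The bipositive case.} Here the algebra structure is not used. A positive linear operator defined on the whole Banach lattice $L^p(G,\omega_1)$ is automatically norm continuous (a standard fact about positive operators between Banach lattices), and the same applies to $T^{-1}$, so $T$ is bicontinuous. Next, a positive operator $S$ between complex Banach lattices satisfies $|Sz|\le S|z|$; applying this to $T$ and to $T^{-1}$ (the latter with $z$ replaced by $Tz$) gives $T|f|=|Tf|$ for all $f$, so $T$ is a lattice isomorphism. Consequently $T$ preserves moduli and finite infima, and since for measurable functions $f\cdot g=0$ a.e.\ is equivalent to $|f|\wedge|g|=0$, it follows that $Tf\cdot Tg=0$ whenever $f\cdot g=0$; the argument for $T^{-1}$ is identical. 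Hence $T$ is biseparating.

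\emph{The isometric case, $p\ne 2$.} An isometry is bounded and, being a bijection, $T^{-1}$ is also an isometry, so $T$ is bicontinuous. For the biseparating property I would use Lamperti's lemma: for $p\ne 2$ the quantity $|\alpha+\beta|^p+|\alpha-\beta|^p-2|\alpha|^p-2|\beta|^p$ has a fixed sign (depending only on whether $p<2$ or $p>2$) and vanishes precisely when $\alpha\beta=0$. If $f\cdot g=0$ then $f$ and $g$ have disjoint supports, so, writing $\|\cdot\|$ for the norm of the ambient $L^p$-space and using linearity and isometry of $T$,
\[
\|Tf+Tg\|^p+\|Tf-Tg\|^p=\|T(f+g)\|^p+\|T(f-g)\|^p=\|f+g\|^p+\|f-g\|^p=2\|f\|^p+2\|g\|^p=2\|Tf\|^p+2\|Tg\|^p .
\]
Thus the integral over $H$ of the function $|Tf+Tg|^p+|Tf-Tg|^p-2|Tf|^p-2|Tg|^p$, which is of constant sign on $H$ by the pointwise Lamperti inequality, equals zero, so it vanishes a.e., and by the equality case of Lamperti's lemma $Tf(y)\,Tg(y)=0$ for a.e.\ $y$. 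Hence $T$ is disjointness preserving; applying the same reasoning to the surjective isometry $T^{-1}$ shows it is disjointness preserving as well, so $T$ is biseparating.

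The only step requiring real work is the equality case in Lamperti's lemma, notably at $p=1$, where convexity-based arguments degenerate and one argues directly from the triangle inequality; granted that lemma, everything else is routine. Equivalently, one may simply invoke the fact --- contained in the proof of the Banach--Lamperti theorem already cited above --- that a linear isometry of an $L^p$-space with $p\ne2$ is disjointness preserving, and so is its inverse when it is onto.
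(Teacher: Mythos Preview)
Your proposal is correct and follows essentially the same route as the paper: in the bipositive case both arguments use automatic continuity of positive operators on Banach lattices and the identity $T|f|=|Tf|$ (the paper phrases this on the real subspace and cites Aliprantis--Burkinshaw and Meyer--Nieberg, while you derive it directly from $|Sz|\le S|z|$ applied to $T$ and $T^{-1}$), and in the isometric case both rely on Lamperti's lemma, with the paper simply citing \cite{MR0193531} whereas you reproduce the Clarkson-type equality argument. The only difference is presentational: your version is more self-contained, the paper's is a two-line citation.
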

\begin{proof}
Suppose first that $T$ is bipositive. By \cite[Thm.4.3]{MR809370}, $T$ and $T^{-1}$ are bounded operators. Positivity implies that $T$ maps the subspace of real-valued functions $L^p_\R(G,\omega)\subset L^p(G,\omega)$ to itself, and on this subspace $|T(f)|=T(|f|)$ (see also \cite{Meyer-Nieberg}). For $f,g\in L^p_\R(G,\omega)$ then $f\cdot g=0$ implies $|f\wedge g|=0$ and $|T(f)\wedge T(g)|=0$, so that $T$ is disjointness preserving on $L^p_\R(G,\omega)$. It is now not hard to see that $T$ is disjointness preserving on all of $L^p(G,\omega)$. By symmetry $T^{-1}$ is also disjointness preserving and therefore $T$ is biseparating.

If $T:L^p(G,\omega_1)\to L^p(H,\omega_2)$ is a linear isometry, since Banach spaces $L^p(G,\omega_1)$ and $L^p(H,\omega_2)$ are isometric to $L^p(G)$ and $L^p(H)$, respectively, it follows from \cite[Corollary 2.1]{MR0193531} that $T$ is disjointness preserving. 
\end{proof} 
\begin{corollary}
Let $T:L^p(G,\omega_1)\to L^p(H,\omega_2)$ be an algebra isomorphism.

$(i)$ If $T$ is  bipositive then $T=T_{\gamma,\phi}$ for some standard isomorphism $(\gamma,\phi)$ from $(G,\omega_1)$ to $(H,\omega_2)$ with $\gamma:G\to\mathbb{R^+}$.

$(ii)$ If $T$ is isometric and $p\ne2$ then $T=T_{\gamma,\phi}$ for some standard isomorphism and the ratio $\omega_1/\omega_2\circ\phi$ is a homomorphism.
\end{corollary}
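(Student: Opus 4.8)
The plan is to read off both statements from Proposition~\ref{p} and Theorem~\ref{bithm}, and then extract the extra information about $\gamma$ (for (i)) and about the ratio $\omega_1/\omega_2\circ\phi$ (for (ii)) by testing the defining property of $T$ on characteristic functions of finite-measure sets. In either case Proposition~\ref{p} first shows that $T$ is bicontinuous and biseparating, so Theorem~\ref{bithm} gives a standard isomorphism $(\gamma,\phi)$ from $(G,\omega_1)$ to $(H,\omega_2)$ with $T=T_{\gamma,\phi}$, where $T_{\gamma,\phi}(f)=c\,\gamma\circ\phi^{-1}\cdot f\circ\phi^{-1}$ and $c=\lambda_1(E)/\lambda_2(\phi(E))>0$ is the measure adjustment constant.

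For (i), I would apply $T$ to $f=\chi_E$ for sets $E\subset G$ of finite positive measure, getting $T_{\gamma,\phi}(\chi_E)=c\,(\gamma\circ\phi^{-1})\cdot\chi_{\phi(E)}$. Since $c>0$ and $T$ is positive, this forces $\gamma\circ\phi^{-1}\ge 0$ $\lambda_2$-a.e.\ on $\phi(E)$; as $E$ is arbitrary and $\phi$ is a measure isomorphism, $\gamma\ge 0$ $\lambda_1$-a.e., hence everywhere by continuity. Being a character, $\gamma$ never vanishes, so in fact $\gamma>0$ on $G$, i.e.\ $\gamma:G\to\mathbb{R}^+$. (Conversely, $T_{\gamma,\phi}$ with $\gamma>0$ is plainly bipositive, so this is the precise shape.)

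For (ii), I would compute $\|T_{\gamma,\phi}(f)\|_{p,\omega_2}^p$. Changing variables $y=\phi(x)$ and using that $\int_H g\,d\lambda_2=\tfrac1c\int_G g\circ\phi\,d\lambda_1$ (a restatement of $\lambda_2(\phi(E))=\lambda_1(E)/c$), one obtains
$$
\|T_{\gamma,\phi}(f)\|_{p,\omega_2}^p=c^{p-1}\int_G|\gamma(x)|^p\,|f(x)|^p\,\omega_2(\phi(x))^p\,d\lambda_1(x).
$$
Since $T$ is an isometry this equals $\|f\|_{p,\omega_1}^p$ for every $f\in L^p(G,\omega_1)$; taking $f=\chi_E$ for all finite-measure $E$ and using that the integrand $c^{p-1}|\gamma|^p(\omega_2\circ\phi)^p-\omega_1^p$ is continuous, it must vanish identically, so $c^{(p-1)/p}\,|\gamma(x)|\,\omega_2(\phi(x))=\omega_1(x)$ for all $x\in G$. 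Evaluating this at $x=e_G$, where $\gamma(e_G)=1$, $\phi(e_G)=e_H$ and $\omega_1(e_G)=\omega_2(e_H)=1$, pins down $c^{(p-1)/p}=1$, whence $\omega_1/(\omega_2\circ\phi)=|\gamma|$ on $G$. As $\gamma$ is a continuous character, $|\gamma|$ is a continuous homomorphism $G\to\mathbb{R}^+$, which is the asserted conclusion.

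All steps are short, and the real content of the corollary already resides in Proposition~\ref{p} and Theorem~\ref{bithm}. The only place requiring a little care is the change-of-variables bookkeeping in (ii): correctly tracking the power of the measure adjustment constant $c$, and then noticing that the constraint forced at the identity element collapses it to~$1$ (so that the homomorphism is exactly $|\gamma|$, with no stray scalar). The passage from "the integrands agree against every $\chi_E$" to "the integrands agree pointwise" is immediate here because both sides are continuous.
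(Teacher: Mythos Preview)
Your proof is correct. Both parts begin the same way as the paper, invoking Proposition~\ref{p} and Theorem~\ref{bithm} to obtain $T=T_{\gamma,\phi}$, but you extract the additional information differently.

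For (i), the paper simply notes that $Tl_xT^{-1}=\gamma(x)l_{\phi(x)}$ is a composition of positive operators and hence bipositive, forcing the scalar $\gamma(x)$ to be positive. You instead test $T$ on characteristic functions and read off $\gamma>0$ from the explicit formula for $T_{\gamma,\phi}(\chi_E)$; this is equally valid and arguably more self-contained, since it does not appeal back to the internal construction in the proof of Theorem~\ref{bithm}. For (ii), the paper recycles the two-sided estimate $\omega_1(x)\le\|T\|\,\|T^{-1}\|\,|\gamma(x)|\,\omega_2(\phi(x))\le\|T\|^2\|T^{-1}\|^2\,\omega_1(x)$ already established as \eqref{long} and \eqref{gamma-omega} in the proof of Theorem~\ref{bithm}; with $\|T\|=\|T^{-1}\|=1$ these collapse to an equality and no change-of-variables bookkeeping is needed. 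Your direct norm computation works just as well and has the advantage of not depending on those intermediate inequalities, at the small cost of tracking the constant $c$ and eliminating it by evaluation at the identity (which, as you note, is automatic when $p=1$ and forces $c=1$ when $p>1$).
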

\begin{proof}
$(i)$ It follows from Theorem \ref{bithm} and Proposition \ref{p} that $T=T_{\gamma,\phi}$ for some standard isomorphism $(\gamma,\phi)$ from $(G,\omega_1)$ to $(H,\omega_2)$. Since both $T$ and $T^{-1}$ are bipositive the multiplier $Tl_xT^{-1}$ in the proof of Theorem \ref{bithm} is also bipositive, so that $\gamma(x)>0$.

$(ii)$ It follows from Theorem \ref{bithm} and Proposition \ref{p} that $T=T_{\gamma,\phi}$ for some standard isomorphism $(\gamma,\phi)$ from $(G,\omega_1)$ to $(H,\omega_2)$. Since $\|T\|=\|T^{-1}\|=1$, inequalities \eqref{long} and \eqref{gamma-omega} imply that
$$
\omega_1(x)\leq|\gamma(x)|\;\omega_2(\phi(x))\le \omega_1(x).
$$
Thus, $\displaystyle{|\gamma|=\frac{\omega_1}{\omega_2\circ\phi}}$, so that this ratio is multiplicative.
\end{proof}

\section{Algebra automorphisms of $\ell^p(\Z,\omega)$}\label{4}

In this section we describe automorphisms of a family of weighted algebras $\ell^p(\Z,\omega)$, applying known resutls on composition operators and in particular the Beurling-Helson theorem.

Set $\omega_a(n)=\max(1,|n|^a)$, $a\ge0$. It is known that for $a\ge1$, the weighted space $\ell^p(\Z,\omega_a)$ is a convolution algebra. For $a>b$, we have $\ell^p(\Z,\omega_a)\subset \ell^p(\Z,\omega_b)$.

\let\cal\mathcal
\let\hat\widehat
\begin{theorem}\label{reduction_of_a}
Suppose that $a\in\Z$ and either $a\ge1$ and $p=1$ or $a\ge2$ and $p>1$. Let $T:\ell^p(\Z,\omega_a)\to \ell^p(\Z,\omega_a)$ be an algebra isomorhism. Then $T$ can be extended to a continuous operator on $\ell^p(\Z)$ and this extention is a linear isomorphism.
\end{theorem}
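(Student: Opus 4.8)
The plan is to show that $T$ is completely determined by the single element $u:=T(\delta_1)$, to identify its Fourier transform $\hat u$ as a $C^1$-diffeomorphism of $\T$, and then to invoke rigidity of composition operators on Fourier-$\ell^p$ algebras. Since $\ell^p(\Z,\omega_a)$ is a commutative convolution algebra with identity $\delta_0$, we get $T(\delta_0)=\delta_0$; the element $u=T(\delta_1)$ is invertible with $T(\delta_{-1})$ its convolution inverse; and $T(\delta_n)=u^{*n}$ for every $n\in\Z$. As finitely supported sequences are dense in both $\ell^p(\Z,\omega_a)$ and $\ell^p(\Z)$, and $\ell^p(\Z,\omega_a)$ embeds continuously and densely in $\ell^p(\Z)$, the theorem reduces to the estimate $\|\sum_n f(n)u^{*n}\|_{\ell^p(\Z)}\le C\|f\|_{\ell^p(\Z)}$ for finitely supported $f$, together with the symmetric estimate governing $T^{-1}$ via $w:=T^{-1}(\delta_1)$. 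I would also record at the outset the continuous inclusion $\ell^p(\Z,\omega_a)\subset\ell^1(\Z)$ — immediate for $p=1$, and for $p>1$ obtained from H\"older's inequality using $\sum_n\omega_a(n)^{-q}<\infty$ (valid since $aq>1$ in both cases of the hypothesis) — so that the Fourier transform $f\mapsto\hat f(t)=\sum_n f(n)e^{int}$ realizes $\ell^p(\Z,\omega_a)$ as a commutative Banach function algebra on $\T$, contained in $A(\T)\subset C(\T)$.

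Next I would identify $\hat u$. The Gelfand spectrum of $\ell^p(\Z,\omega_a)$ is $\T$: a character $\chi$ is determined by $z=\chi(\delta_1)$, continuity together with the merely polynomial growth of $\|\delta_n\|_{p,\omega_a}=\omega_a(n)$ forces $|z|=1$, and conversely every $\chi_z:f\mapsto\sum_n f(n)z^n$ with $|z|=1$ is a character by the $\ell^1$-embedding. Since $T$ is an algebra isomorphism, $\operatorname{spec}(u)=\operatorname{spec}(\delta_1)=\T$, so $\hat u(\T)=\T$; and the identities $T(T^{-1}\delta_1)=\delta_1=T^{-1}(T\delta_1)$, read on the Fourier side (all the series converge in $A(\T)$ because $u,w\in\ell^1(\Z)$ and $|\hat u|=|\hat w|=1$ on $\T$), become $\hat w\circ\hat u=\hat u\circ\hat w=\operatorname{id}_\T$. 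Thus $\hat u$ is a homeomorphism of $\T$ with inverse $\hat w$. Finally, $\|u^{*n}\|_{\ell^p(\Z)}\le\|u^{*n}\|_{p,\omega_a}=\|T(\delta_n)\|_{p,\omega_a}\le\|T\|\,\omega_a(n)$ (and likewise for $w$); equivalently $\hat u$ lies in the Fourier image of $\ell^p(\Z,\omega_a)$, which by a Sobolev-type H\"older estimate bounding $\sum_n|n|^k|u(n)|$ for $k\le a-1$ embeds into $C^{a-1}(\T)$ (into $C^a(\T)$ when $p=1$). As $a\ge2$ (resp. $a\ge1$ when $p=1$), both $\hat u$ and $\hat w=\hat u^{-1}$ are of class $C^1$, so $\hat u$ is a $C^1$-diffeomorphism of $\T$.

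It remains to show that the composition operator $C_{\hat u}:F\mapsto F\circ\hat u$, which is what $T$ becomes under the Fourier transform, extends boundedly from the Fourier image of $\ell^p(\Z,\omega_a)$ to $\mathcal F\ell^p:=\{F:(\hat F(n))_n\in\ell^p(\Z)\}$, with bounded inverse $C_{\hat w}$; since $\mathcal F\ell^p$ is just $\ell^p(\Z)$ transported by the Fourier transform, this is precisely the remaining estimate. For $p=2$ this is soft: $\mathcal F\ell^2=L^2(\T)$, and for the $C^1$-diffeomorphism $\hat u$ the change of variables gives $\|F\circ\hat u\|_{L^2}^2=\int_\T|F|^2\,|(\hat u^{-1})'|\le\|(\hat u^{-1})'\|_\infty\|F\|_{L^2}^2$, and the inverse $C_{\hat w}$ is estimated the same way. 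For $p\ne2$ I would invoke the Beurling--Helson-type rigidity of composition operators on Fourier-$\ell^p$ algebras: a non-affine $C^1$-diffeomorphism of $\T$ cannot induce a bounded composition operator on $\ell^p(\Z,\omega_a)$ when $p\ne2$. The mechanism is a stationary-phase (van der Corput) estimate: if the lift $\psi$ of $\hat u$ has $\psi''\not\equiv0$, then on a frequency window of length $\asymp|n|$ the coefficients $\widehat{u^{*n}}(k)$ have size $\asymp|n|^{-1/2}$, so the $\ell^p$-norm of $k\mapsto k^{a}\,\widehat{u^{*n}}(k)$ — a lower bound for $\|u^{*n}\|_{p,\omega_a}$ — grows like $|n|^{a+1/p-1/2}$, which for $p<2$ exceeds the bound $\|u^{*n}\|_{p,\omega_a}\le\|T\|\,\omega_a(n)\asymp|n|^a$ from the previous paragraph, forcing $\psi$ affine; the range $p>2$ is reached by passing to the adjoint $T^*$, which on $(\ell^p(\Z,\omega_a))^*=\ell^q(\Z,\omega_a^{-1})$ with $q<2$ is a composition operator (by $\hat w$) followed by multiplication by the Jacobian $(\hat w)'$ — invertible in the relevant multiplier algebra since $\hat w\in C^1$ with $(\hat w)'$ non-vanishing — so the same argument applies with $q<2$. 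In every case $\hat u(z)=\zeta z^{\pm1}$ for some $|\zeta|=1$, whence $u=\zeta\delta_{\pm1}$ and $(Tf)(m)=\zeta^{\pm m}f(\pm m)$, which is an isometric bijection of $\ell^p(\Z)$, in particular a continuous linear isomorphism.

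The main obstacle is the $p\ne2$ part of the last paragraph: turning the stationary-phase heuristic into a clean, uniform proof (one must show the leading term $(in\psi')^{a}e^{in\psi}$ of the $a$-th difference of $u^{*n}$ genuinely dominates in $\mathcal F\ell^p$-norm, and the borderline case $a=2$, $p>1$, where only $\hat u\in C^1$ is available, needs extra care), or, preferably, locating and citing the precise quantitative Beurling--Helson / composition-operator statement that applies — this is presumably where the paper uses ``known results on composition operators on classical function spaces.'' The reduction to $u$, the spectral identification, the growth estimate, and the $p=2$ case are all routine.
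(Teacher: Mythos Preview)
Your approach is genuinely different from the paper's, and it conflates this theorem with its corollary. The paper proves the extension \emph{without} first classifying $\phi=\hat u$. Writing $\mathcal{L}_{p,a}=\ell^p(\Z,\omega_a)$, it uses the formal derivative $(f')_n=(n+1)f_{n+1}$, which sends $\mathcal{L}_{p,a}\to\mathcal{L}_{p,a-1}$, together with the chain rule $(\hat f\circ\phi)'=(\hat f'\circ\phi)\cdot\phi'$ and the fact that $1/\phi'=(\phi^{-1})'\in\hat{\mathcal{L}}_{p,a-1}$ (since $T^{-1}\delta_1\in\mathcal{L}_{p,a}$), to deduce $\hat f'\circ\phi\in\hat{\mathcal{L}}_{p,a-1}$ for all $f\in\mathcal{L}_{p,a}$. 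Then, given an arbitrary $g\in\mathcal{L}_{p,a-1}$, one sets $f_n=g_n/n$ for $n\ne0$, $f_0=0$; this $f$ lies in $\mathcal{L}_{p,a}$ and $\hat f'=(\hat g-g_0)/z$, whence $\hat g\circ\phi\in\hat{\mathcal{L}}_{p,a-1}$. Thus $T$ extends one weight-level down, and induction reaches $\ell^p(\Z)$ (with one extra observation at the last step when $p>1$, using $1/\phi'\in\hat\ell^1(\Z)$ as a multiplier). Only \emph{after} this, in the corollary, does the paper invoke Beurling--Helson (for $p=1$) and the Lebedev--Olevskii $C^1$ theorem (for $p>1$) on the unweighted $\ell^p(\Z)$ to force $\phi$ to be a monomial.

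You instead try to show $\phi$ is affine first and then note the extension is trivial. The $p=2$ case is fine, but for $p\ne2$ you are relying on a \emph{weighted} Beurling--Helson statement that you cannot simply cite: Lebedev--Olevskii applies to composition operators bounded on the \emph{unweighted} $\ell^p(\Z)$, which is exactly what you are trying to establish, so quoting it here would be circular. Your stationary-phase sketch is a reasonable heuristic but, as you acknowledge, not a proof: it needs uniform lower bounds for $\|u^{*n}\|_{p,\omega_a}$, control of the lower-order terms in the $a$-fold differencing, and it breaks at the borderline $a=2$, $p>1$, where you only have $\hat u\in C^1$ and no $\psi''$ for van der Corput. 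The $p>2$ duality reduction is also problematic: $\ell^q(\Z,\omega_a^{-1})$ is not a convolution algebra, and the $\ell^p$-adjoint of $C_\phi$ is not a composition operator, so ``the same argument applies'' requires substantial new work. In short, you are trying to prove the corollary in order to prove the theorem; the paper's derivative trick gives the theorem directly and defers the rigidity to a clean citation on the unweighted space.
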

\begin{proof}
Denote the space $\ell^p(\Z,\omega_a)$ by $\cal L_{p,a}$. Note that $\cal L_{p,a}\subset \cal L_{1,a-1}$ (this is checked by a direct calculation). It is known that $\cal L_{p,a}$ is an algebra with respect to convolution if $p=1$, $a\ge0$ and $p>1$, $a\ge1$. Moreover, $\cal L_{p,a}$ is a module over $\cal L_{1,a}$ for every $a\ge0$. The spectrum of every algebra in this class is the unit circle $\T$ \cite{grsh,kuz-msb}. Further we denote by $\hat f$ the Gelfand transform of $f\in\cal L_{p,a}$, $\hat f:\T\to\C$. Consider the formal derivation on $\cal L_{p,a}$: $(f')_n=(n+1)f_{n+1}$. It is readily seen that $f'\in \cal L_{p,a-1}$ for every $f\in\cal L_{p,a}$.

Let us prove first that $T$ can be extended to $\cal L_{p,a-1}$.
For every $z\in\T$, the map $f\mapsto \widehat{(Tf)}(z)$ is a character of $\cal L_{p,a}$, which is obviously nonzero, so that there exists $u_z\in\T$ such that $\widehat{(Tf)}(z)=\hat f(u_z)$. Denote $\phi(z)=u_z$; we get a map $\phi:\T\to\T$.
Denote by $\delta_1\in\cal L_{p,a}$ the indicator function of $1$; one can verify that $\phi=\widehat{T\delta_1}$. This implies, in particular, that $\phi\in\hat{\cal L}_{p,a}$.

Since $T$ is invertible, the equation $Tf=\delta_1$ has a solution in $\cal L_{p,a}$ which has of course the Gelfand transform $\phi^{-1}$;  this implies that $(\phi^{-1})'=1/\phi'\in\hat{\cal L}_{p,a-1}$.

For every $f\in\cal L_{p,a}$, we have by assumption $\widehat{Tf}=\hat f\circ\phi\in\hat{\cal L}_{p,a}$. The formal derivative of a composition satisfies the usual formula (what can be checked by continuity and by its values on $\delta_1^n$); then, $(\hat f\circ\phi)' = (\hat f'\circ\phi)\cdot\phi'\in\hat{\cal L}_{p,a-1}$.
Since $1/\phi'\in\hat{\cal L}_{p,a-1}$ and this is an algebra, we have also $\hat f'\circ\phi\in\hat{\cal L}_{p,a-1}$.

For $g=(g_n)\in\cal L_{p,a-1}$, set $f_0=0$ and $f_n=g_n/n$ for $n\ne0$. We have $f\in\cal L_{p,a}$, and one calculates that $\hat f'(z)=\sum nf_n z^{n-1} = \sum_{n\ne0} g_n z^{n-1}=(\hat g(z)-g_0)/z$. As $\hat f'\circ\phi\in\hat{\cal L}_{p,a-1}$, it follows that $(\hat g\circ\phi)/\phi\in\hat{\cal L}_{p,a-1}$.

Since $\phi\in\hat{\cal L}_{p,a}$, we have also $\phi\in \hat{\cal L}_{1,a-1}$, so that $\hat g\circ\phi\in \hat{\cal L}_{p,a-1}$ for every $g\in\cal L_{p,a-1}$. The operator $g\mapsto (\hat g\circ\phi)\check{\;}$ is an extension of $T$, and it is clear that it is bounded on $\hat{\cal L}_{p,a-1}$ and is an isomorphism.

Applying this reasoning by induction, we get the statement of the theorem for $p=1$ and $a\ge1$.

For $p>1$, we arrive by induction at an isomorphism of $\cal L_{p,1}$ which is of the form $g\mapsto (\hat g\circ\phi)\check{\;}$ with $\phi\in\cal L_{p,2}$, by the initial assumption of the theorem. The above proof cannot be applied directly (with $a=1$) to extend it to $\cal L_{p,0}=\ell^p(\Z)$ since $\cal L_{p,0}$ is not an algebra anymore. However, this fact ($\cal L_{p,a-1}$ being an algebra) is used only to show that $\hat h/\phi'\in \hat{\cal L}_{p,a-1}$ for all $h\in\cal L_{p,a-1}$, with $\hat h=(\hat f\circ\phi)'$. In our assumptions, the inclusion $\hat h/\phi'\in \hat{\cal L}_{p,0}= \hat{\ell^p(\Z)}$ holds for all $h\in \ell^p(\Z)$ since $1/\phi'\in \hat{\cal L}_{p,a-1}\subset \hat{\cal L}_{1,a-2}\subset \hat{\ell^1(\Z)}$. This allows to conclude, finally, that $T$ extends to $\ell^p(\Z)$, also as a bounded isomorphism.
\end{proof}

\begin{corollary}
Suppose that $a$ is an integer and either $a\ge1$ and $p=1$ or $a\ge2$ and $p>1$. Let $T:\ell^p(\Z,\omega_a)\to \ell^p(\Z,\omega_a)$ be an algebra isomorhism. Then there is $\lambda\in\T$ such that either $(Tf)_n=\lambda^n f_n$ or $(Tf)_n=\lambda^n f_{-n}$ for all $f\in\ell^p(\Z,\omega_a)$ and all $n\in\Z$.
\end{corollary}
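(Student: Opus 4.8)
The plan is to deduce the corollary directly from Theorem~\ref{reduction_of_a} together with the Beurling--Helson theorem and its analogue on $\widehat{\ell^p(\Z)}$. By Theorem~\ref{reduction_of_a}, $T$ extends to a bounded linear isomorphism $\widetilde T$ of $\ell^p(\Z)$, and by the construction used in that proof the extension acts on Fourier transforms as composition with a fixed map, $\widehat{\widetilde T g}=\widehat g\circ\phi$, where $\phi=\widehat{T\delta_1}$ is a continuous map $\T\to\T$. First I would check that $\phi$ is in fact a homeomorphism of $\T$: since $T$ is onto and the Gelfand transforms of elements of $\ell^p(\Z,\omega_a)$ separate points of $\T$, the map $\phi$ is injective, and running the construction of Theorem~\ref{reduction_of_a} for $T^{-1}$ produces a continuous two-sided inverse of $\phi$.

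The next step is to extract a uniform bound on the powers of $\phi$. Since $\widehat{\delta_k}(z)=z^k$, one has $\widehat{\widetilde T\delta_k}=\phi^k$, and therefore, directly from the definition of the norm on $\widehat{\ell^p(\Z)}$,
$$
\sup_{k\in\Z}\bigl\|\phi^k\bigr\|_{\widehat{\ell^p(\Z)}}=\sup_{k\in\Z}\bigl\|\widetilde T\delta_k\bigr\|_{\ell^p(\Z)}\le\|\widetilde T\|<\infty .
$$
For $p=1$ this is the hypothesis of the Beurling--Helson theorem (equivalently, $\widetilde T$ is then an algebra automorphism of $\ell^1(\Z)$ and $\phi$ its induced homeomorphism of the Gelfand space), and one concludes $\phi(z)=\lambda z$ or $\phi(z)=\lambda z^{-1}$ for some $\lambda\in\T$. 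For $1<p$ with $p\neq2$ the same conclusion follows from the corresponding variant of the Beurling--Helson theorem on $\widehat{\ell^p(\Z)}$, applied to the homeomorphism $\phi$ whose powers are uniformly bounded there; the possibilities $\phi(z)=\lambda z^m$ with $|m|\neq1$ are then excluded because such a $\phi$ induces an isometric but non-surjective composition operator, contradicting bijectivity of $\widetilde T$. (Here $p\neq2$ is essential: on $\widehat{\ell^2(\Z)}=L^2(\T)$ every smooth diffeomorphism of $\T$ induces a bounded automorphism, so the $p=2$ case of this last implication fails, and the statement is to be read with $p\neq2$.)

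It remains to translate the shape of $\phi$ back to $T$. If $\phi(z)=\lambda z$ then $\widehat{\widetilde T f}(z)=\sum_n\lambda^n f_n z^n$, so $(\widetilde T f)_n=\lambda^n f_n$; if $\phi(z)=\lambda z^{-1}$ then $\widehat{\widetilde T f}(z)=\sum_n\lambda^n f_n z^{-n}$, so $(\widetilde T f)_n=\lambda^{-n}f_{-n}$, which is the second stated form after replacing $\lambda$ by $\lambda^{-1}\in\T$. Finally, $\widetilde T$ restricted to $\ell^p(\Z,\omega_a)$ equals $T$, because both are bounded on $\ell^p(\Z,\omega_a)$ and agree on each $\delta_k$, hence on the dense subspace of finitely supported sequences; and both candidate operators obviously preserve $\ell^p(\Z,\omega_a)$, since $|\lambda^n|=1$ and $\omega_a(-n)=\omega_a(n)$. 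The only real work beyond Theorem~\ref{reduction_of_a} is the Beurling--Helson input, and the step that requires care --- and that forces the restriction $p\neq2$ --- is exactly the passage from ``$\phi$ has uniformly bounded powers'' to ``$\phi$ is of the form $\lambda z^{\pm1}$''.
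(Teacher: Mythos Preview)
Your proof for $p=1$ matches the paper's: extend via Theorem~\ref{reduction_of_a}, apply Beurling--Helson to the composition map $\phi$, and rule out $|m|>1$ by invertibility.

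For $p>1$, however, there is a genuine gap. You invoke ``the corresponding variant of the Beurling--Helson theorem on $\widehat{\ell^p(\Z)}$'' for a homeomorphism with uniformly bounded powers, but no such result is available in that bare form. The paper's argument is different and uses a piece of information you do not exploit: since $\phi=\widehat{T\delta_1}$ with $T\delta_1\in\ell^p(\Z,\omega_a)$ and $a\ge2$, the inclusion $\mathcal L_{p,a}\subset\mathcal L_{1,a-1}$ from the proof of Theorem~\ref{reduction_of_a} yields $\sum_n(1+|n|)\,|(T\delta_1)_n|<\infty$, forcing $\phi\in C^1(\T)$. With this smoothness in hand, the paper invokes the Lebedev--Olevskii theorem \cite{leb-olev-1994}: a $C^1$ map $\phi:\T\to\T$ whose composition operator is bounded on $\ell^p(\Z)$ must be a monomial. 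This is precisely where the hypothesis $a\ge2$ (and not merely $a\ge1$) for $p>1$ earns its keep; your argument, if it worked, would apply already for $a=1$, a case the paper explicitly leaves open right after the corollary.

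Your remark that $p=2$ should be excluded is well taken---the paper itself later exhibits non-standard automorphisms of $\ell^2(\Z,\omega_a)$, and the Lebedev--Olevskii theorem is stated for $p\neq2$. But noting this does not repair the gap above: for $1<p<\infty$, $p\neq2$, you still need the $C^1$ regularity of $\phi$ before any known Beurling--Helson type result applies.
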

\begin{proof}
By Proposition \ref{reduction_of_a}, $T$ extends to an isomorphism of $\ell^p$, which is a composition operator $C_\phi$ with $\phi\in \ell^p(\Z,\omega_a)$.
If $p=1$, it follows from the Beurling-Helson theorem that $\phi$ has the form $\phi(z)=z^m\lambda$ for some $m\in\Z$ and $\lambda\in\T$. Applying the same reasoning to $T^{-1}$, we conclude that $\phi^{-1}(z) = (z/\lambda)^{1/m}$ is also a monomial in $z$, what implies that $m=\pm1$, so that $Tf$ is as described in the statement.

If $p>1$ and $a\ge2$, the result follows from a generalisation of Beurling-Helson theorem given by Lebedev and Olevskii \cite{leb-olev-1994}: if $\phi:\T\to\T$ is a $C^1$ function of the argument and the composition operator given by $\phi$ is bounded on $\ell^p(\Z)$, then $\phi$ is a monomial. In our case, $\phi\in \ell^p(\Z,\omega_a)$, what implies $\phi\in C^1$. The rest is proved as in the $\ell^1$ case.
\end{proof}

For $a=1$ and $p>1$ the question is open.

For fast growing weights we can observe a similar behaviour:

\begin{example} Let $w$ be a weight on $\Z$ of one of the two forms: $w_n=e^{n^\gamma}$, $0<\gamma<1$, or $w_n=a^{|n|}(1+n^2)$, $a>1$. Then the elementary Blaschke product $b_r(z)=\dfrac{z-r}{1-rz}$, $r>0$, does not define a bounded composition operator on $\ell^p(\Z,w)$ for any $p\ne2$.
\end{example}
\begin{proof}
{\it Case 1: $w_n=e^{n^\gamma}$, $0<\gamma<1$}. The spectrum of the algebra $\ell^p(\Z,w)$ is still $\T$, as in the unweighted case \cite{grsh,kuz-msb}. We have $\widehat{C_{b_r}\delta_n} = b_r^n$ on $\T$. Set $\alpha_r = (1+r)/(1-r)$ and $k_n=[\alpha n]$. It is known \cite{sz} that $\check b_r(k_n) \ge c_r n^{-1/3}$ as $n\to\infty$, with a constant $c_r>0$. It follows that $\|(b_r^n) \check{\;}\|_{p,w} = \|C_{b_r}\delta_n\|_{p,w} \ge c_r w_{k_n} n^{-1/3} \ge c_r e^{\alpha^{\gamma}n^\gamma-1} n^{-1/3}$. At the same time, $\|\delta_n\|_{p,w} = w_n = e^{n^\gamma}$. Since $e^{(\alpha^{\gamma}-1)n^\gamma}n^{-1/3}\to\infty$, $n\to\infty$, the operator $C_{b_r}$ cannot be bounded.  

{\it Case 2: $w_n=a^{|n|}(1+n^2)$, $a>1$.} One verifies that $b_r\in\ell^p(\Z,w)$ iff $r<1/a$. For these values of $r$ and positive $n$, we reason as above: $\| (b_r^n) \check{\;}\|_{p,w} = \|C_{b_r}\delta_n\|_{p,w} \ge c_r w_{k_n} n^{-1/3} \ge c_r a^{\alpha n-1} n^{-1/3}$. At the same time, $\|\delta_n\|_{p,w} = w_n = a^{n}(1+n^2)$, and we conclude that $C_{b_r}$ cannot be bounded.  
\end{proof}


We conclude this section with a family of examples of non-standard almost isometric isomorphisms on $\ell^2(\Z,\omega)$. It is well known that $L^2$ spaces admit non-standard isometries, and in particular, the algebras $L^2(G)$ of a compact group $G$ \cite{MR01935310}. Introducing a weight on $G$ can make $L^2(G,\omega)$ a convolution algebra, which would still allow for non-standard automorphisms. We find it interesting, however, to look at the question from the following side.

There is a detailed theory of composition operators on Hardy spaces \cite{shapiro} including their weighted variants \cite{gallardo}. It provides among others almost isometric non-standard operators on the spaces $\ell^2(\Z_+,\omega)$. Below, we show that this gives examples of non-standard almost isometric automorphisms of the algebras $\ell^2(\Z,\omega)$.

\begin{proposition}
Let $a>1$ be an integer and $\omega(n)=\max(1, |n|^a)$. For every $\epsilon>0$, there exists an isomorphism $T_\epsilon: \ell^2(\Z,\omega)\to \ell^2(\Z,\omega)$ of distortion less than $1+\epsilon$ which is not given by a weighted automorphism of $\Z$.
\end{proposition}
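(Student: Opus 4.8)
The plan is to realise $T_\epsilon$ as a composition operator on $\T$ induced by a M\"obius self-map of the disc close to the identity. Via the Fourier transform $f\mapsto\sum_n f_nz^n$, the convolution algebra $\ell^2(\Z,\omega)$, $\omega(n)=\max(1,|n|^a)$, is identified with an algebra $\mathcal A$ of functions on $\T$ under pointwise multiplication, with $\|f\|^2=\sum_n|f_n|^2\omega(n)^2$; since $\sum_{n\ne0}|n|^{-2a}<\infty$ we have $\mathcal A\subset C(\T)$, and as a Banach space $\mathcal A=\mathcal A_{\ge1}\oplus\C\oplus\mathcal A_{\le-1}$ according to the sign of the frequency, with $\mathcal A_{\ge0}=\mathcal A_{\ge1}\oplus\C$ the weighted Hardy space $\ell^2(\Z_+,\omega)=:H^2_\omega$ and $\mathcal A_{\le0}$ its complex conjugate. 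For $0<r<1$ put $b_r(z)=\dfrac{z-r}{1-rz}$; it is an automorphism of $\mathbb{D}$ with $b_r^{-1}=b_{-r}$, its boundary restriction $\psi_r=b_r|_\T$ is a real-analytic diffeomorphism of $\T$ with $\overline{\psi_r(z)}=\psi_r(\bar z)$, and I set $T_rf=f\circ\psi_r$. Being a composition operator, $T_r$ is automatically multiplicative for pointwise multiplication, hence — after transport — an algebra homomorphism of $\ell^2(\Z,\omega)$; and $T_r^{-1}=T_{-r}$, again a composition operator.

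Next I would show $T_r$ is bounded on $\mathcal A$ and control $\|T_r\|$ and $\|T_r^{-1}\|$ sharply. Because $b_r$ maps $\mathbb{D}$ into $\mathbb{D}$, $T_r$ sends $\mathcal A_{\ge0}$ into itself, acting there as the composition operator $C_{b_r}$; the identity $\overline{\psi_r(z)}=\psi_r(\bar z)$ shows $T_r$ sends $\mathcal A_{\le0}$ into itself, acting there as a reflection of the complex conjugate of $C_{b_r}$, so with the same norm; the only coupling between the two halves is at frequency $0$. Boundedness and invertibility of $C_{b_r}$ on the polynomially-weighted (Dirichlet-type) space $H^2_\omega$ is classical for automorphism symbols, and the composition-operator theory of such spaces — \cite{shapiro}, and \cite{gallardo} in the weighted setting — moreover gives that the distortion of $C_{b_r}$ on $H^2_\omega$ tends to $1$ as $r\to0^+$. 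The frequency-$0$ coupling is harmless: for $g=\sum_{n\ge1}g_nz^n\in\mathcal A_{\ge1}$ the constant term of $C_{b_r}g$ is $g(b_r(0))=\sum_{n\ge1}g_n(-r)^n$, of modulus $\le\dfrac{r}{\sqrt{1-r^2}}\|g\|$ by Cauchy--Schwarz and $\omega(n)\ge1$ (similarly on the conjugate side). Splitting $f=f_{\ge1}+f_0+f_{\le-1}$ and inserting these bounds gives, for every $\mu>0$, an estimate of the shape $\|T_rf\|^2\le\max\{\,\|C_{b_r}\|^2_{H^2_\omega}+C_\mu\,r^2/(1-r^2),\ 1+\mu\,\}\,\|f\|^2$; letting $r\to0^+$ and then $\mu\to0^+$, and using that $T_r$ fixes the norm-one constant function $1$ (so $\|T_r\|\ge1$), one obtains $\|T_r\|\to1$, and likewise $\|T_r^{-1}\|=\|T_{-r}\|\to1$. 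Hence $\|T_r\|\,\|T_r^{-1}\|\to1$ as $r\to0^+$.

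Given $\epsilon>0$, I would then choose $r\in(0,1)$ with $\|T_r\|\,\|T_r^{-1}\|<1+\epsilon$ and put $T_\epsilon=T_r$: it is an algebra automorphism of $\ell^2(\Z,\omega)$ of distortion $<1+\epsilon$. It is not induced by a weighted automorphism of $\Z$: by the description of standard isomorphisms of $\ell^2(\Z,\omega)$ obtained above (with $\zeta\in\T$), such an operator acts as $f\mapsto(\zeta^nf_n)_n$ or $f\mapsto(\zeta^nf_{-n})_n$, that is, on the Fourier side, as composition with $z\mapsto\zeta z$ or with $z\mapsto\zeta\bar z$; but for $r>0$ the map $\psi_r$ is an orientation-preserving M\"obius diffeomorphism of $\T$ with non-constant derivative — it fixes $\pm1$ yet is not the identity — hence it is neither a rotation nor a rotation composed with conjugation.

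The main obstacle is the sharp norm control in the second paragraph: the bare boundedness of $C_{b_r}$ on $\ell^2(\Z_+,\omega)$ is routine, but distortion bounds that genuinely tend to $1$ with respect to the original weighted $\ell^2$-norm — rather than to some equivalent Hilbert norm, which would be useless since distortion is not an isomorphism invariant — are exactly what one extracts from the composition-operator estimates of the cited literature, after which the frequency-$0$ bookkeeping above finishes the job; multiplicativity, invertibility and non-standardness are all soft. An alternative route to boundedness of $T_r$ (and to $T_r\to I$ in an equivalent norm) uses the identification $\mathcal A\simeq H^a(\T)$ with the $L^2$-Sobolev space together with Fa\`a di Bruno and a change of variables, but returning to the unequivalent weighted norm again requires the care just described.
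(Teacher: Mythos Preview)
Your proposal is correct and follows essentially the same route as the paper: realise $T_\epsilon$ as the composition operator $C_{b_r}$ with $b_r(z)=(z-r)/(1-rz)$, split $\hat f$ into its analytic and anti-analytic parts (the paper writes $\hat f=f_0+f_+ +\overline{f_-}$ with $f_\pm\in H^2(\omega)$), invoke the Gallardo--Partington norm bound $\|C_{b_r}\|_{H^2(\omega)}\le\big(\tfrac{1+r}{1-r}\big)^{\Lambda}$ on each part, and control the frequency-$0$ cross term $f(b_r(0))$ by Cauchy--Schwarz to obtain an explicit $K(r)\to1$; your additional remark that $\psi_r$ is neither a rotation nor a rotation composed with conjugation is the (easy) non-standardness check that the paper leaves implicit.
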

\begin{proof}
It is known \cite{zorboska} that the weighted Hardy space $H^2(\omega)$ with the same weight $(\omega_n)_{n\ge0}$ is composition invariant with respect to any holomorphic automorphism of the unit disk. In particular, for the function $\phi(z) = \dfrac {z-r}{1-rz}$, $0<r<1$, the norm of composition operator is bounded by $(1+r)^\Lambda/(1-r)^\Lambda$ with a constant $\Lambda$ depending on the weight \cite{gallardo}. We will show that  the operator $C_\phi: f\mapsto (\hat f\circ \phi)\check{\;}$ is well defined on $\ell^2(\Z,\omega)$ and estimate its norm.

For $f\in \ell^2(\Z,\omega)$, set
$$
f_+(z) = \sum_{n=1}^\infty f_n z^n, \quad f_-(z) = \sum_{n=1}^\infty \bar f_{-n} z^n,
$$
then
$$
\hat f(z) = f_0+ f_+(z) + \overline{ f_-(z)},
$$
and
$$
\hat f\circ\phi(z) = f_0 + f_+\circ\phi(z) + \overline{f_-\circ\phi(z)}.
$$
As $f_+,f_-\in H^2(\omega)$, we have that $f_+\circ\phi$, $f_+\circ\phi$ are in $H^2(\omega)$, so that
\begin{align*}
\hat f\circ\phi(z) &= f_0 + (f_+\circ\phi)(0) + \sum_{n>0} (f_+\circ\phi)_n z^n + \overline{(f_-\circ\phi)(0)} + \sum_{n<0} \overline{(f_-\circ\phi)_{-n}}z^{n}
\\&= f(\phi(0)) + \sum_{n>0} (f_+\circ\phi)_n z^n + \sum_{n<0} \overline{(f_-\circ\phi)_{-n}}z^{n}.
\end{align*}
The three summands above are orthogonal in $\ell^2(\Z,\omega)$, thus
\begin{align*}
\|C_\phi(f)\|_{2,\omega}^2 &= |f(\phi(0))|^2 + \|f_+\circ\phi\|_{2,\omega}^2 + \|f_-\circ\phi\|_{2,\omega}^2
\\& \le |f(\phi(0))|^2 + \Big(\frac{1+r}{1-r}\Big)^{2\Lambda} \Big( \|f_+\|_{2,\omega}^2+\|f_-\|_{2,\omega}^2\Big).
\end{align*}
We can estimate
\begin{align*}
|f(\phi(0))| &= |f(-r)| \le |f_0|+|f_+(-r)|+|f_-(-r)| \le |f_0|+ \sum_{n>0} r^n |f_n| + \sum_{n<0} r^n |f_n|
\\& \le |f_0| + r \Big( \sum_{n\ne0} \frac{r^{2(n-1)}}{\omega(n)^2}\Big)^{1/2} \Big(\sum_{n\ne0} |f_n|^2\omega(n)^2\Big)^{1/2}
\\& \le |f_0|+r \,d \,\|f-f_0\|_{2,\omega},
\end{align*}
where $d = \big(\sum_{n\ne0} \omega(n)^{-2}\big)^{1/2}$. Next,
\begin{align*}
\|C_\phi(f)\|_{2,\omega}^2 &\le \big( |f_0|+r \,d \,\|f-f_0\|_{2,\omega} \big)^2 + \Big(\frac{1+r}{1-r}\Big)^{2\Lambda} \Big( \|f_+\|_{2,\omega}^2+\|f_-\|_{2,\omega}^2\Big)
\\& = |f_0|^2 + 2rd|f_0|\cdot \|f-f_0\|_{2,\omega} + r^2 d^2 \|f-f_0\|_{2,\omega}^2 + \Big(\frac{1+r}{1-r}\Big)^{2\Lambda} \|f-f_0\|_{2,\omega}^2
\\& \le |f_0|^2 + 2rd\|f\|_{2,\omega}^2 + \Big( r^2 d^2 + \Big(\frac{1+r}{1-r}\Big)^{2\Lambda} \Big) \|f-f_0\|_{2,\omega}^2
\\& \le \Big( 2rd + r^2 d^2 + \Big(\frac{1+r}{1-r}\Big)^{2\Lambda} \Big) \|f\|_{2,\omega}^2.
\end{align*}
Denoting
$$
K(r) = \Big( 2rd + r^2 d^2 + \Big(\frac{1+r}{1-r}\Big)^{2\Lambda} \Big)^{1/2},
$$
we have $\|C_\phi\|\le K(r)$.

The inverse operator to $C_\phi$ is $C_\psi$ with the function $\psi(z) = \dfrac{z+r}{1+rz}$ and has the norm $\|C_\psi\|\le K(-r)$. Since $K(r)\to1$, $r\to0$, this proves the proposition.
\end{proof}

\nocite{*}

\bibliographystyle{alpha}

\end{document}